\documentclass{amsart}
\usepackage{amssymb}

\usepackage{enumerate}

\usepackage{color}

%\usepackage{showkeys}

%     If your article includes graphics, uncomment this command.
%\usepackage{graphicx}

%     If the article includes commutative diagrams, ...
%\usepackage[cmtip,all]{xy}

%     Update the information and uncomment if AMS is not the copyright
%     holder.
%\copyrightinfo{2009}{American Mathematical Society}

\newtheorem{theorem}{Theorem}[section]
\newtheorem{lemma}[theorem]{Lemma}

\theoremstyle{definition}
\newtheorem{definition}[theorem]{Definition}

\theoremstyle{remark}
\newtheorem{remark}[theorem]{Remark}

\numberwithin{equation}{section}

\DeclareMathOperator{\divv}{div}

\begin{document}

\title[short text for running head]{Yao XIAO, Dongfen Bian}
\title[Global solution to the nematic liquid crystal flows]{Global solution to the nematic liquid crystal flows with heat effect}
%    Only \author and \address are required; other information is
%    optional.  Remove any unused author tags.

%    author one information
%\author[short version for running head]{Yao Xiao}

\author{Dongfen Bian}
\address{School of Mathematics and Statistics, Beijing Institute of Technology, Beijing 100081, China; Beijing Key Laboratory on MCAACI, Beijing Institute of Technology, Beijing 100081, China}
\curraddr{}
\email{biandongfen@bit.edu.cn}
\thanks{}

%    author two information
\author{Yao Xiao}
\address{The Institute of Mathematical Sciences, The Chinese University of Hong Kong, Hong Kong}
%\curraddr{}
\email{yxiao@math.cuhk.edu.hk}
\thanks{}

%    \subjclass is required.
\subjclass[2010]{Primary, 35B35, 35B40, 35B65, 35Q35, 76D03}

%\date{}

%\dedicatory{}

%    Abstract is required.
\begin{abstract}
The temperature-dependent incompressible nematic liquid crystal flows in a bounded domain  $\Omega\subset\mathbb{R}^N$ ($N=2, 3$) are studied in this paper. Following Danchin's method in [J. Math. Fluid Mech., 2006], we use a localization argument to recover the maximal regularity of Stokes equation with variable viscosity, by which we first prove the local existence of strong solution, then extend it to a global one provided that the initial data is a sufficiently small perturbation around the trivial equilibrium state. This paper also generalizes Hu-Wang's result in [Commun. Math. Phys., 2010] to the non-isothermal case. 
\end{abstract}

\maketitle
%    Text of article.
\section{Introduction}
Liquid crystal is an intermediate state of matter between isotropic fluids and crystalline solids. Such materials can be artificially obtained typically by increasing the temperature of a solid crystal (low molecular weight) or increasing the concentration of certain solvent (high molecular weight). Among various types of liquid crystals, 
{\textit{nematic}} ($\nu\acute{\eta}\mu\alpha$, thread) ones are those composed of rod-like molecules with head-to-tail symmetry. For more physical and chemical background on the underlying subject, we refer to \cite{Chan, Gennes} and the references therein.

In this paper, we will focus on the mathematical analysis on the following hydrodynamic system, which is a macroscopic continuum description of the time evolution of homogeneous non-isothermal incompressible nematic liquid crystals.
\begin{align}\label{1.1}
\begin{cases}
& \partial_t u+u\cdot\nabla u-\divv\big(\mu(\theta)\mathcal{D}(u)\big)+\nabla P=-\Delta d\cdot\nabla d, \\
& \partial_t d+u\cdot\nabla d=\Delta d+|\nabla d|^2d,  \\
& \partial_t\theta+u\cdot\nabla\theta-\Delta \theta=\frac 12\mu(\theta)|\mathcal{D}(u)|^2+\big|\Delta d+|\nabla d|^2d\big|^2,\\
&\divv u=0,\quad |d|=1.
\end{cases}
\end{align}
The above equations correspond to conservation of linear momentum, angular momentum, internal energy, incompressibility and physical constraint on the director fields. Here, we denote by $u$, $d$, $P$ and $\theta$ the velocity, director, pressure and temperature, respectively. $\mathcal {D}(u)=\frac 12(\nabla u+\nabla^T u)$ is the Cauchy stress  tensor.The total energy density of the system is $e:=\frac 12(|u|^2+|\nabla d|^2)+\theta$.  System \eqref{1.1} is a simplified version of those proposed in \cite{OP, SV}.

Suppose the nematic liquid crystals are confined in a bounded domain $\Omega\subset\mathbb{R}^N (N=2, 3)$, the following initial-boundary conditions are imposed in this paper.
\begin{equation}\label{1.2}
(u, d, \theta)\big|_{t=0}=(u_0, d_0, \theta_0),\quad (u, \mathcal{B}d, \partial_\nu\theta)|_{\partial\Omega}=(0, 0, 0),
\end{equation}
where $\mathcal{B}d=\partial_\nu d$ or $d-{\bf{e}}$, ${\bf{e}}$ is a fixed unit constant vector and $\nu$ is the outward normal vector on $\partial\Omega$. Moreover, $\divv u_0=0$, $|d_0|=1$, $d_0|_{\partial\Omega}={\bf e}$. One easily checks that if $(u, d, P,\theta)$ is a smooth solution to \eqref{1.1}, then under the initial-boundary conditions \eqref{1.2}, the total energy is conserved along with the time evolution:
\begin{equation*}
\frac{d}{dt}\int_\Omega e(t, x)\, dx=0.
\end{equation*}

If neglecting the heat effect and $\mu$ is constant, then \eqref{1.1} reduces to the simplified version of the so-called Ericksen-Leslie system which is developed by Ericksen, Leslie et. al  \cite{Ericksen1, Ericksen-2, Leslie-1, Leslie-2} in the 1960s. For such simplified system, Lin \cite{Lin-1} and Lin-Liu \cite{Lin-2, Lin-3} initiated the study on the Ginzburg-Landau approximated system in 1990s. Specifically, they replaced $|\nabla d|^2d$ by a penalty function $1/{\epsilon^2}(1-|d_\epsilon|^2)d_\epsilon$ and the physical constraint $|d|=1$ is relaxed. For fixed $\epsilon$, they obtained the global well-posedness and partial regularity of the approximated system in two and three dimensional space. 

As for the analysis of the original simplified system, it is more challenging. However, there has been some important results in the two dimensional case, thanks to the local {\it{a priori}} estimates on $\Delta d$ obtained due to a Ladyzhenskja-type inequality by Struwe \cite{Struwe}. In 2010, Lin-Lin-Wang \cite{LLW} proved the existence of global weak solutions, which is regular except for possible finite time slices. At the same time, Hong \cite{Hong} obtained the same results by proving the convergence of the solutions to the approximated system as $\epsilon\to 0_+$. Similar results have been achieved for more general (stress tensor) systems by Hong-Xin \cite{HX}, Huang-Lin-Wang \cite{HLW} and Wang-Wang \cite{WW}. The uniqueness of the above weak solutions is also proved by Lin-Wang \cite{HW}, Xu-Zhang \cite{XZ} and Li-Titi-Xin \cite{LTX}. Also it's worth remarking that  Lei-Li-Zhang \cite{LLZ} generalized Ding-Lin's results on the harmonic maps flow in \cite{DL}, proved that if the initial director satisfies a natural angle condition, then the weak solutions obtained in \cite{ Hong, LLW} are actually smooth. 

For the three dimensional case, the approach by Ladyzhenskja-type inequality fails, little is known for global well-posedness under general large initial data, except that Lin-Wang \cite{LW2} proved the existence of global weak solution if the initial director is targeted on a hemisphere.  As for local well-posedness, there are some results by Hong-Li-Xin \cite{HLX} for the Oseen-Frank model. Also for small perturbations around trivial equilibrium state $(0, {\bf{e}})$, global strong solution is proved in \cite{HNPS}, through a quasilinear approach. And Wang \cite{Wang} obtained the global mild solutions for initial data $(u_0, d_0)$ belonging to possibly the largest space $BMO^{-1}\times BMO$, with small norm.

If heat effect is considered, the system is energetically closed but more complicated. Feireisl-Rocca-Shimperna \cite{F1} and Feireisl-Fremond-Rocca-Schimperna \cite{F2} first studied the approximated system with heat effect and obtained the existence of the global weak solution in two and three dimensional space. Later, Li-Xin \cite{LX} proved the existence of global weak solutions to system \eqref{1.1} in $\mathbb{R}^2$. The uniqueness and regularity of such weak solutions remain open.

As mentioned in the beginning, low-molecular-weight nematic liquid crystals generally are sensitive to the variation of temperature, especially near the threshold of phase transitions.  According to the physical experiment in \cite{GAH}, the principal viscosity of nematic liquid crystals is a continuously differentiable function of temperature in the nematic phase. In particular, we may assume that 
\begin{equation}\label{1.4}
0<\underline{\mu}\le \mu(\theta)\le\bar\mu<\infty,\quad|\mu'(\theta)|\le\bar{\mu}'<\infty\quad\forall~\theta,
\end{equation}
where $\underline\mu$, $\bar\mu$ and $\bar{\mu}'$ are material constants. According to the physical experiments, nematic liquid crystals usually display some instability near the phase transition thresholds, while in the nematic phase, it is generally expected to be stable. In this paper, we give a rigorous mathematical proof for such stability, at least for model \eqref{1.1}. Notice that $(0, \bf{e}, \theta_*)$  is always a trivial equilibrium state to \eqref{1.1}, where $\theta_*$ is arbitrary constant. Without loss of generality, we will set $\theta_*$ to be $0$ from now on. And we will prove that there always exists a unique global strong solution to \eqref{1.1} if the initial data is a sufficiently small perturbation of this trivial equilibrium state.  We would like to remark that recently Hieber-Pr\"uss proved the stability of $(0, {\bf{e}}, 1)$ for a more general model, via a quasilinear approach in \cite{HP}. Also the assumption on the viscosity is different from ours. Here a linear approach is adopted, which the authors also consider to be of independent interest.

Before stating our main result, we set up the functional spaces for strong solutions. Here a {\it strong solution} on $\Omega_T$ means that a set $(u, d, P, \theta)$ satisfying system \eqref{1.1} almost everywhere with initial boundary conditions \eqref{1.2} and belongs to $E^{p, q, r, s}_T$, which is defined as the following.
\begin{definition}\label{def1.1}
For $T>0$ and $1\le p, q, r, s<\infty$, we denote $E^{p, q, r, s}_T$ by the set of $(u, d, P, \theta)$ such that
\begin{align*}
& u\in C([0,T]; D^{1-\frac 1p, p}_{A_r}(\Omega))\cap W^{1,p}(0,T; L^r(\Omega))\cap L^p(0,T;W^{2,r}(\Omega)),\\
& d\in C([0,T]; B^{3-\frac 2p}_{r, p}(\Omega))\cap W^{1,p}(0,T; W^{1,r}(\Omega))\cap L^p(0,T;W^{3,r}(\Omega)),\\
& P\in W^{1,p}(0,T; W^{1,r}(\Omega)),\quad \int_\Omega P\,dx=0,\\
& \theta\in C([0,T]; B^{2-\frac 2s}_{q, s}(\Omega))\cap W^{1,s}(0,T;L^q(\Omega))\cap L^s(0,T;W^{2,q}(\Omega)).
\end{align*}
\end{definition}
Obviously $E^{p, q, r, s}_T$ is a Banach space, we denote it's natural norm as $\|\cdot\|_{E^{p, q, r, s}_T}$. We also remark that the condition 
$$\int_\Omega P\, dx=0$$
in the above definition holds automatically if we replace $P$ by 
$P-\dfrac 1{|\Omega|}\int_\Omega P\, dx $
in system \eqref{1.1}. Also 
$$D^{1-\frac 1p, p}_{A_r}:=(L^r_\sigma, D(A_r))_{1-\frac 1p, p},$$
where
$$L^r_\sigma(\Omega):=\{u\in L^r(\Omega), \divv u=0\},\quad D(A_r)=\{u\in W^{2, r}(\Omega), \divv u=0, u|_{\partial\Omega}=0\}.$$
Moreover, it follows from Proposition 2.5 in \cite{Danchin2006} that
\begin{equation*}
D^{1-\frac 1p, p}_{A_r}\hookrightarrow B^{2(1-\frac 1p)}_{r, p}\cap L^r_\sigma(\Omega).
\end{equation*}
The Besov space $B^{2(1-\frac 1p)}_{r, p}$ on a bounded domain can be regarded as the interpolation space between $L^r$ and $W^{2,r}$, that is, 
\begin{equation*}
B^{2(1-\frac 1p)}_{r, p}=(L^r, W^{2,r})_{1-\frac 1p, p}.
\end{equation*}

We note that this kind of strong solution has been proved to exist for the density-dependent incompressible Navier-Stokes equations by Danchin \cite{Danchin2006} and the simplified Ericksen-Leslie system without the term $|\nabla d|^2d$ by Hu-Wang \cite{HW}. Inspired by their work, we generalize the above results to system \eqref{1.1}. Our first result on the local existence is as follows:

\begin{theorem}\label{local}
Suppose $\Omega\subset\mathbb{R}^N(N=2,3)$ is a bounded domain with smooth boundary. \eqref{1.4} holds true, $u_0\in D^{1-\frac 1p, p}_{A_r}$, $d_0\in B^{3-\frac 2p}_{r, p}$ and $\theta_0\in B^{2-\frac 2s}_{q, s}$ with $p$, $q$, $r$ and $s$ satisfying 
$$1<p<\infty,\quad  2\le s<\infty,\quad N<r\le q,\quad \frac 2p+\frac Nr<\frac 1s+\frac N{2q}+1<2.$$ 
 Then there exists $T_0>0$ such that the system \eqref{1.1}--\eqref{1.2} has a unique local strong solution $(u, d, P, \theta)\in E^{p, q, r, s}_{T_0}$, $T_0$ depends on the initial data.
\end{theorem}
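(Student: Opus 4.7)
The plan is to adapt the general strategy of Danchin \cite{Danchin2006} and Hu-Wang \cite{HW}: freeze the variable viscosity at the initial temperature, linearize the coupled system, establish the appropriate maximal regularity estimates for each linear building block, and close a Banach fixed point argument in the functional space $E^{p,q,r,s}_T$. The key new feature beyond \cite{HW} is the temperature-dependent viscosity coupled with a heat equation whose sources are quadratic in $\nabla u$ and in $\Delta d + |\nabla d|^2 d$; the linearization has to be arranged so that these nonlinearities can be absorbed as forcing terms in the correct norms dictated by Definition \ref{def1.1}.

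First I would solve the three decoupled linear problems. For the director $d$ and temperature $\theta$ these are the heat equation on $\Omega$ with the boundary conditions from \eqref{1.2}, and classical $L^s$-$L^q$ (respectively $L^p$-$L^r$) maximal regularity in the half-Sobolev scales of Definition \ref{def1.1} applies. For the velocity and pressure, the relevant system is the generalized Stokes problem
\begin{equation*}
\partial_t u - \divv\!\bigl(\mu(\theta_0)\mathcal{D}(u)\bigr) + \nabla P = f,\quad \divv u = 0,\quad u|_{\partial\Omega}=0,\quad u|_{t=0}=u_0,
\end{equation*}
with a merely continuous coefficient $\mu(\theta_0)\in B^{2-2/s}_{q,s}(\Omega)\hookrightarrow C^0(\overline\Omega)$. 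Here I would invoke Danchin's localization argument: use a finite partition of unity adapted to a cover of $\overline\Omega$ on which the oscillation of $\mu(\theta_0)$ is arbitrarily small, apply the constant-coefficient $L^p$-$L^r$ Stokes estimate on each piece, and patch back through a Neumann series argument in $L^p(0,T;L^r)$, recovering the divergence-free constraint by means of a Bogovskii-type correction. This produces the $T$-independent bound
\begin{equation*}
\|\partial_t u\|_{L^p(0,T;L^r)} + \|u\|_{L^p(0,T;W^{2,r})} + \|\nabla P\|_{L^p(0,T;L^r)} \le C\bigl(\|u_0\|_{D^{1-\frac 1p,p}_{A_r}} + \|f\|_{L^p(0,T;L^r)}\bigr).
\end{equation*}

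Next I would define, for $R$ large depending on $\|(u_0,d_0,\theta_0)\|$, a closed ball $\mathcal B_R$ in the affine subspace of $E^{p,q,r,s}_T$ consisting of elements with the prescribed initial data, and a map $\Phi:\mathcal B_R\to E^{p,q,r,s}_T$ sending $(\tilde u,\tilde d,\tilde P,\tilde\theta)$ to the solution of the linear problem in which the nonlinear terms $\tilde u\cdot\nabla\tilde u$, $\Delta\tilde d\cdot\nabla\tilde d$, $\divv((\mu(\tilde\theta)-\mu(\theta_0))\mathcal{D}(\tilde u))$, $\tilde u\cdot\nabla\tilde d$, $|\nabla\tilde d|^2\tilde d$, $\tilde u\cdot\nabla\tilde\theta$, $\mu(\tilde\theta)|\mathcal{D}(\tilde u)|^2$ and $|\Delta\tilde d + |\nabla\tilde d|^2\tilde d|^2$ are treated as forcings. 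The embedding hypotheses $N<r\le q$ and $\frac 2p+\frac Nr<\frac 1s+\frac N{2q}+1<2$ are precisely what is needed so that, via Sobolev embedding, H\"older's inequality in space, and interpolation in time with a positive gain of a power $T^\alpha$, each of these forcings lies in the correct Bochner space with norm bounded by $CR^2 T^\alpha$; the Lipschitz bound in \eqref{1.4} handles the viscosity-difference term. Taking $T$ small enough yields $\Phi(\mathcal B_R)\subset\mathcal B_R$.

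The contraction property of $\Phi$ follows from the same bilinear structure applied to differences, giving an estimate of the form $\|\Phi(v_1)-\Phi(v_2)\|_{E^{p,q,r,s}_T}\le CRT^{\alpha}\|v_1-v_2\|_{E^{p,q,r,s}_T}$; shrinking $T$ further produces a contraction whose fixed point is the desired local strong solution. The principal obstacle, as in \cite{Danchin2006}, is the variable-viscosity Stokes step: the localization procedure must be compatible with both the incompressibility constraint and the Dirichlet boundary condition, and one must verify that the constant appearing in the maximal regularity bound is independent of $T$. Once this ingredient is in place, the quadratic structure of all remaining nonlinearities, together with the embedding conditions on $p,q,r,s$, makes the rest of the fixed point argument fairly standard.
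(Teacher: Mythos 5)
Your strategy is sound and reaches the same conclusion, but in one substantive respect it differs from the paper's route. The paper sets up an explicit Picard iteration in which the Stokes step \eqref{3.3} is solved with the \emph{time-dependent} coefficient $\mu(\theta^{n+1})$; the key analytical input is Theorem~\ref{thm2.4}, maximal $L^p$--$L^r$ regularity for Stokes with a $t$-dependent viscosity $\mu(\theta)$, $\theta\in L^\infty_t(W^{1,q})\cap\dot C^\beta_t(L^\infty)$, which in the appendix requires a spatial localization (Theorem~\ref{thm2.5}), an additional \emph{temporal} localization to tame the $\dot C^\beta$-modulus of $\theta$ (Theorem~\ref{thm2.7}), and a Gronwall step (Theorem~\ref{thm2.8}). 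You instead freeze the viscosity at $\mu(\theta_0)$ and push $\divv\bigl((\mu(\tilde\theta)-\mu(\theta_0))\mathcal{D}(\tilde u)\bigr)$ into the nonlinear forcing, so that only the autonomous (time-independent, $x$-dependent) maximal regularity is needed. This genuinely bypasses the time-localization machinery of Theorem~\ref{thm2.7}. It does close: since elements of $\mathcal B_R$ share the initial datum $\theta_0$, $\|\mu(\tilde\theta)-\mu(\theta_0)\|_{L^\infty_T(L^\infty)}\le\bar\mu'\,T^\beta\|\tilde\theta\|_{\dot C^\beta_T(L^\infty)}$ with the $\dot C^\beta_T(L^\infty)$ norm controlled by the embedding $W^{1,s}_T(L^q)\cap L^s_T(W^{2,q})\hookrightarrow\dot C^\beta_T(L^\infty)$ ($s\ge2$, $q>N$), giving a gain of $T^\beta$ against $\Delta\tilde u\in L^p_T(L^r)$; and $\mu'(\tilde\theta)\nabla\tilde\theta\cdot\mathcal{D}(\tilde u)$ lands in $L^p_T(L^q)\subset L^p_T(L^r)$ with a gain of $T^{\frac12(1-N/r)}$ by the paper's \eqref{l3.1}. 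The fixed-point formulation versus the paper's explicit iteration and Cauchy-sequence argument (Steps 1--3 of Section 3) is only a presentational difference; the index conditions enter in exactly the same way through Lemma~\ref{lemma3.1}.

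Two caveats. First, the $T$-\emph{independent} bound you assert for the frozen-coefficient Stokes step does not follow from Danchin's localization as the paper carries it out: the pressure-duality step there leaves a $\|u\|_{L^p_T(L^r)}$ term on the right (see \eqref{n2.6}), which is removed by Gronwall only at the cost of a factor $\exp(CtC_\theta(t))$ in \eqref{2.47}. For an autonomous, invertible sectorial operator one can upgrade to a $T$-independent constant by abstract maximal-regularity theory, and for local existence (where $T$ is chosen small anyway) the point is harmless, but as written your claim overstates what that localization proof yields. Second, and this is an actual gap, you never verify the pointwise constraint $|d|=1$, which is part of system \eqref{1.1} and hence of the definition of a strong solution; the fixed point of $\Phi$ a priori only satisfies the unconstrained $d$-equation. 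The paper closes this in Step~4: using $\Delta(|d|^2)=2\Delta d\cdot d+2|\nabla d|^2$ one derives the linear parabolic problem \eqref{3.44} for $|d|^2-1$ with zero initial and boundary data, and Gronwall with $\nabla d\in L^2_T(L^\infty)$ forces $|d|^2\equiv1$. That step needs to be appended to your contraction argument.
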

Our second result is on the global existence of strong solutions.
\begin{theorem}\label{global}
Under the conditions of Theorem \ref{local}, in addition, assume that $p\le 2s$, then  there exists $\delta>0$ such that if
\begin{equation*}
\|(u_0,d_0-{\bf e}, \theta_0)\|_{D^{1-\frac 1p, p}_{A_r}\times B^{3-\frac 2 p}_{r, p}\times B^{2-\frac 2 s}_{q, s}}\le \delta,
\end{equation*}
then  system \eqref{1.1}-\eqref{1.2} admits a unique global strong solution $(u, d, P, \theta)\in E^{p, q, r, s}_T$, for any $T>0$ and 
\begin{equation}
\|(u, d-{\bf{e}}, P, \theta)\|_{E^{p , q, r, s}_\infty}\le C\delta,
\end{equation}
for some $C$ independent of initial data and time.
\end{theorem}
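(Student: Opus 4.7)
The plan is to run a continuation argument based on maximal $L^p$-regularity for the linearization of \eqref{1.1} at the equilibrium $(0,\mathbf{e},0)$. Setting $\tilde d:=d-\mathbf{e}$ and viewing $(u,\tilde d,\theta)$ as a small perturbation of zero, I would rewrite \eqref{1.1} as
\[
\partial_t u - \mu(0)\,\divv\mathcal{D}(u) + \nabla P = F_1,\quad \partial_t \tilde d - \Delta \tilde d = F_2,\quad \partial_t \theta - \Delta\theta = F_3,
\]
with $\divv u=0$ and boundary data inherited from \eqref{1.2}, where $F_1$ collects $-u\cdot\nabla u$, $-\Delta\tilde d\cdot\nabla\tilde d$ and the viscosity correction $\divv[(\mu(\theta)-\mu(0))\mathcal{D}(u)]$; $F_2$ collects $-u\cdot\nabla\tilde d$ and $|\nabla\tilde d|^2(\tilde d+\mathbf{e})$; and $F_3$ collects $-u\cdot\nabla\theta$ together with the two quadratic heat sources. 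Applying in succession the maximal regularity of the Stokes operator with constant viscosity $\mu(0)$, of the Laplacian on $\tilde d$ with the boundary condition $\mathcal{B}\tilde d|_{\partial\Omega}=0$, and of the Neumann Laplacian on $\theta$ to these decoupled linearizations gives a \emph{$T$-independent} estimate
\[
M(T):=\|(u,\tilde d,P,\theta)\|_{E^{p,q,r,s}_T}\leq C\|(u_0,d_0-\mathbf{e},\theta_0)\|_{\mathrm{data}} + C\bigl(\|F_1\|_{L^p_T L^r}+\|F_2\|_{L^p_T W^{1,r}}+\|F_3\|_{L^s_T L^q}\bigr).
\]

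The next step is to bound each source norm by a quadratic (or higher) power of $M(T)$, uniformly in $T$. The convection terms and the Ericksen stress $\Delta\tilde d\cdot\nabla\tilde d$ are handled by H\"older combined with the embeddings of the trace spaces $D^{1-1/p,p}_{A_r}\hookrightarrow L^\infty$ and $B^{3-2/p}_{r,p}\hookrightarrow W^{1,\infty}$, both of which follow from $r>N$ and the scaling hypothesis $\tfrac2p+\tfrac Nr<\tfrac1s+\tfrac{N}{2q}+1<2$. The viscosity correction is linear in $\theta$ thanks to \eqref{1.4}, so it gains an extra factor of $M(T)$ automatically. The main obstacle lies in the quadratic heat sources $\mu(\theta)|\mathcal{D}(u)|^2$ and $|\Delta\tilde d+|\nabla\tilde d|^2(\tilde d+\mathbf{e})|^2$, which must be placed in $L^s_T L^q$. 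For these I would interpolate the high-regularity components $u\in L^p_T W^{2,r}$, $\tilde d\in L^p_T W^{3,r}$ against the time-uniform trace components $u\in L^\infty_T W^{1,r}$, $\tilde d\in L^\infty_T W^{2,r}$; the extra hypothesis $p\le 2s$ is what guarantees that the resulting time Lebesgue exponent after squaring is at least $s$, so that these terms can be absorbed into $M(T)^2$.

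Putting everything together yields an a priori inequality $M(T)\le C_0\delta + C_1 M(T)^2$ on the maximal existence interval. A standard bootstrap closes the proof: set $T^*:=\sup\{T>0: M(T)\le 2C_0\delta\}$, and observe that if $\delta$ is so small that $4C_0C_1\delta<1$, then the inequality forces $M(T^*)<2C_0\delta$; since $T\mapsto M(T)$ is continuous and Theorem \ref{local} allows extension of the solution as long as the $E^{p,q,r,s}$-norm stays bounded, this rules out $T^*<\infty$ and hence $T^*=\infty$, producing the claimed global bound. Uniqueness follows from the contraction estimates used to establish Theorem \ref{local}.
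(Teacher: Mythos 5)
Your proposal is correct and the overall skeleton (a priori bound, superlinear inequality, continuation argument) matches the paper's, but your treatment of the variable viscosity is genuinely different. You linearize the momentum equation at the constant value $\mu(0)$ and move $\divv\bigl[(\mu(\theta)-\mu(0))\mathcal{D}(u)\bigr]$ to the right-hand side as a nonlinear source. In the small-perturbation regime this is legitimate: $\|(\mu(\theta)-\mu(0))\Delta u\|_{L^p_T(L^r)}\lesssim\|\theta\|_{L^\infty_T(L^\infty)}\|\Delta u\|_{L^p_T(L^r)}$ is quadratic, using $B^{2-\frac 2s}_{q,s}\hookrightarrow L^\infty$ (a consequence of $\frac 2s+\frac Nq<2$), while $\mu'(\theta)\nabla\theta\cdot\mathcal{D}(u)$ is likewise quadratic by H\"older with $\|\nabla\theta\|_{L^\infty_T(L^q)}$ and a Sobolev embedding on $\nabla u$. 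Hence only the $T$-independent constant-coefficient Stokes estimate (Theorem \ref{thm2.3}, due to Danchin) is required. The paper instead applies the variable-viscosity Stokes maximal-regularity estimate \eqref{2.4} of Theorem \ref{thm2.4} directly to the $u$-equation with coefficient $\mu(\theta)$; this is the paper's main technical device, proved in the appendix by space and time localization, and its extra lower-order term $C_\theta(t)\|u\|_{L^p_t(L^r)}$ is absorbed by smallness, leading to the inequality \eqref{4.5} which is structurally the same superlinear bound you obtain. Your route is more elementary for the global step and bypasses Theorem \ref{thm2.4} at this stage; bear in mind, though, that Theorem \ref{thm2.4} is still used indirectly, because the local existence you invoke to continue the solution relies on it (with a large $\theta_0$ the spatial deviation of $\mu(\theta)$ from a constant cannot be absorbed by smallness alone). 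The role of $p\le 2s$ in producing the $T$-uniform interpolation of Lemma \ref{lemma3.2} for the quadratic heat sources is the same in both proofs.
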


\begin{remark}
\begin{enumerate} Concerning the above two theorems, we make following remarks.
\item In the above two theorems, the index set is not empty. In fact, one admissible choice for both is   $s=p=2$ and $q=r>N$. Here we have one more restriction $p\le 2s$ for global existence is due to some time independent interpolation inequality in Lemma \ref{lemma3.2}.

\item It is interesting to compare our result to those in \cite{HLLW}, in which an example of finite time blow-up is given for arbitrarily small $\|u_0\|_{L^2}+\|\nabla d_0\|_{L^2}$. There is no contradiction here since our smallness condition is stronger.

\item Notice that Theorem \ref{global} actually proves the stability of equilibrium state in nematic phase, $(0,{\bf{e}}, 0)$. But whether it is asymptotically stable or not is unknown to the authors at this moment, we hope to address this issue in a future paper. 
\end{enumerate}
\end{remark}
The rest of this paper is organized as follows. In Section 2, we first state the maximal regularity for linear parabolic equation and Stokes equation with variable viscosity. In Section 3,  we establish the local existence and uniqueness of strong solution to system \eqref{1.1} by an iteration method. In Section 4, the global existence of strong solution for small perturbations around the trivial equilibrium state is proved.  Finally, in the appendix, the proof of maximal regularity of Stokes equation with variable viscosity is presented.
%%%%%%%%%%%%%%%%%%%%%%%%%%%%
\section{The linear estimates}
\subsection{Linear parabolic equation}
First we recall the maximal regularity for the parabolic operators (cf. Theorem 4.10.7 and Remark 4.10.9 in \cite{Amann}):
\begin{theorem}\label{thm2.1}
Given $1<p, q<\infty$, for the Cauchy problem
\begin{align}\label{n2.1}
\begin{cases}
& \partial_t\omega-\Delta \omega=f,\\
& \omega(0)=\omega_0,
\end{cases}
\end{align}
\begin{enumerate}[i)]
\item if $\omega_0\in B^{2-\frac 2p}_{q, p}$ and $f\in L^p(0,T; L^{q}(\mathbb{R}^N))$, then system \eqref{n2.1} has a unique solution $\omega\in W^{1,p}(0,T;L^{q})\cap L^p(0,T; W^{2,q})$ satisfying
\begin{align}\label{n2.2}
\begin{split}
&\|\omega\|_{C([0,T]; B^{2-\frac 2 p}_{q, p})}+\|\omega\|_{W^{1,p}(0,T;L^{q})\cap L^p(0,T;W^{2,q})}\\
&\quad\le C(\|\omega_0\|_{B^{2-\frac 2p}_{q, p}}+\|f\|_{L^p(0,T;L^{q})}),
\end{split}
\end{align}
where $C$ is independent of $\omega_0$, $f$ and $T$.

\item if $\omega_0\in B^{3-\frac 2p}_{q, p}$ and $f\in L^p(0,T; W^{1,q}(\mathbb{R}^N))$, then system \eqref{n2.1} has a unique solution $\omega\in W^{1,p}(0,T;W^{1,q})\cap L^p(0,T; W^{3,q})$ satisfying
\begin{align}\label{n2.3}
\begin{split}
&\|\omega\|_{C([0,T]; B^{3-\frac 2 p}_{q, p})}+\|\omega\|_{W^{1,p}(0,T;W^{1,q})\cap L^p(0,T;W^{3,q})}\\
&\quad\le C(\|\omega_0\|_{B^{3-\frac 2p}_{q, p}}+\|f\|_{L^p(0,T;W^{1,q})}),
\end{split}
\end{align}
where $C$ is independent of $\omega_0$, $f$ and $T$.
\end{enumerate}
\end{theorem}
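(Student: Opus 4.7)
The plan is to establish maximal $L^p$--$L^q$ regularity for the heat equation on $\mathbb{R}^N$ by splitting the solution via Duhamel's formula. Writing $\omega(t)=e^{t\Delta}\omega_0+\int_0^t e^{(t-s)\Delta}f(s)\,ds$, I would estimate the homogeneous part $e^{t\Delta}\omega_0$ and the inhomogeneous part separately. For the homogeneous part, the key fact is that $B^{2-\frac 2p}_{q,p}(\mathbb{R}^N)$ is exactly the trace space at $t=0$ of the maximal regularity class $W^{1,p}(0,T;L^q)\cap L^p(0,T;W^{2,q})$. This follows from the real interpolation identity $(L^q,W^{2,q})_{1-\frac 1p,p}=B^{2-\frac 2p}_{q,p}$ together with the standard trace theorem for vector-valued Sobolev spaces, and it provides both the quantitative two-sided bound for $\|e^{t\Delta}\omega_0\|_{W^{1,p}(0,T;L^q)\cap L^p(0,T;W^{2,q})}$ and the continuity in time into $B^{2-\frac 2p}_{q,p}$, with constants independent of $T$.

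The main obstacle is the inhomogeneous term: one must show that $f\mapsto\Delta\int_0^t e^{(t-s)\Delta}f(s)\,ds$ is bounded on $L^p(0,T;L^q(\mathbb{R}^N))$. My plan is to invoke Weis's operator-valued Fourier multiplier theorem together with the fact that $-\Delta$ on $L^q(\mathbb{R}^N)$ admits a bounded $H^\infty$-calculus of any angle less than $\pi/2$, so that its semigroup is $R$-analytic. Equivalently one can appeal to the Dore--Venni theorem after checking bounded imaginary powers for $\partial_t$ on $L^p(\mathbb{R};L^q)$ and for $-\Delta$ on $L^q$. Either route yields
\[
\Big\|\int_0^{\cdot}e^{(\cdot-s)\Delta}f(s)\,ds\Big\|_{W^{1,p}(0,T;L^q)\cap L^p(0,T;W^{2,q})}\le C\|f\|_{L^p(0,T;L^q)}
\]
with $C$ independent of $T$, which combined with the trace estimate for the initial datum delivers \eqref{n2.2}.

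For part (ii) I would reduce to part (i) by differentiating in space: if $\omega$ solves \eqref{n2.1}, then each $\partial_j\omega$ solves the same equation with datum $\partial_j\omega_0\in B^{2-\frac 2p}_{q,p}$ and forcing $\partial_j f\in L^p(0,T;L^q)$. Applying part (i) to $\partial_j\omega$ for $j=1,\dots,N$ gives control of $\omega$ in $W^{1,p}(0,T;W^{1,q})\cap L^p(0,T;W^{3,q})$; adding the $L^q$-level estimate for $\omega$ itself yields \eqref{n2.3}. The continuous embedding of the maximal regularity class into $C([0,T];B^{3-\frac 2p}_{q,p})$ follows from the trace theorem a second time. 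The genuinely delicate step throughout is the operator-theoretic input ($R$-boundedness or bounded $H^\infty$-calculus) that validates maximal regularity; everything else is trace theory and differentiation under the integral sign.
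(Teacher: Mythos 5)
The paper does not actually prove Theorem~\ref{thm2.1}; it simply quotes it as a known result (Theorem~4.10.7 and Remark~4.10.9 of Amann's monograph) and uses it as a black box thereafter. Your proposal, by contrast, sketches a genuine proof, and the route is the standard modern one: Duhamel splitting, the trace-space identity $(L^q,W^{2,q})_{1-1/p,p}=B^{2-2/p}_{q,p}$ for the homogeneous part, and $R$-boundedness / operator-valued multipliers (Weis) or Dore--Venni for the convolution term. This is correct in substance and arguably more self-contained than the citation, though worth noting that the reference Amann works through interpolation--extrapolation scales for abstract evolution equations rather than through Weis's theorem (which postdates that book); both give the same conclusion. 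Your reduction of part~(ii) to part~(i) by differentiating in $x_j$ is clean and correct on $\mathbb{R}^N$, where $\partial_j$ commutes with $\partial_t-\Delta$, but be aware it does not transfer verbatim to the bounded-domain Neumann/Dirichlet cases invoked in the remark immediately after the theorem; there one needs elliptic regularity up to the boundary rather than commuting a derivative through the problem. Finally, a small caveat that applies equally to the paper's statement and to your write-up: the $T$-independent constant is genuinely $T$-uniform for the top-order quantities $\|\partial_t\omega\|_{L^p(L^q)}$, $\|\nabla^2\omega\|_{L^p(L^q)}$ and for $\|\omega\|_{L^\infty(B^{2-2/p}_{q,p})}$ via the trace theorem, but on $\mathbb{R}^N$ with $\omega_0=0$ the low-order piece $\|\omega\|_{L^p(0,T;L^q)}$ cannot be controlled by $C\|f\|_{L^p(0,T;L^q)}$ uniformly in $T$ (take $f$ time-independent with Fourier support near the origin), so strictly speaking the stated inequality should be read as an estimate on the $T$-uniform quantities.
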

\begin{remark}
Notice that the above results also hold true for the Neumann or Dirichlet problem on bounded domain with sufficiently regular boundary.
\end{remark}

\subsection{Linearized Stokes equation}
The following theorem plays a key role in our analysis.
\begin{theorem}\label{thm2.4}
Let $\Omega\subset\mathbb{R}^N$ be a bounded domain with smooth boundary, $1<p, r<\infty$, $N<r\le q$ and $\mu$ satisfies \eqref{1.4}. $u_0\in D^{1-\frac 1p, p}_{A_r}$, $f\in L^p(0,T;L^r)$ and $\theta$ satisfies
\begin{equation}\label{2.1}
\theta\in L^\infty(0,T;W^{1,q})\cap \dot{C}^\beta(0,T;L^\infty),
\end{equation}
for some $\beta\in(0,1)$. Then the system
\begin{align}\label{2.2}
\begin{cases}
& \partial_t u-\divv\big(\mu(\theta)\mathcal{D}(u)\big) +\nabla P= f,\\
& \divv u=0, \quad\int_\Omega P\, dx=0,\\
& u|_{t=0}=u_0,\quad u|_{\partial\Omega}=0,
\end{cases}
\end{align}
has a unique solution $(u,P)$ satisfying
\begin{equation}\label{2.3}
\begin{split}
&\|u\|_{C\big([0,t]; D^{1-\frac 1p, p}_{A_r}\big)}+\|(\partial_t u,\Delta u,\nabla P)\|_{L^p_t(L^r)}\\
&\quad\le CB_\theta^{k}(t)e^{CtC_\theta(t)}\big(\|u_0\|_{D^{1-\frac 1p, p}_{A_r}}+\|f\|_{L^p_t(L^r)}\big),
\end{split}
\end{equation}
and
\begin{equation}\label{2.4}
\begin{split}
& \|u\|_{C\big([0,t]; D^{1-\frac 1p, p}_{A_r}\big)}+\|(\partial_t u,\Delta u, u,\nabla P)\|_{L^p(0,t;L^r)}\\&\quad\le C\Big(B_\theta^{k}(t)\big(\|u_0\|_{D^{1-\frac 1p, p}_{A_r}}+\|f\|_{L^p(0,t;L^r)}\big)+C_\theta(t)\|u\|_{L^p(0,t;L^r)}\Big),
\end{split}
\end{equation}
for any $0<t\le T$, where $C$ is independent of $u_0$, $f$, $\theta$ and $T$,
\begin{align*}
& B_\theta(t)=1+\|\nabla\theta\|_{L^\infty_t(L^q(\Omega))}^{\frac q{q-N}},\quad C_\theta(t)=B_\theta^{l_1}(t)(\|\nabla\theta\|_{L^\infty_t(L^q)}+\|\theta\|_{\dot{C}^\beta(0,t;L^\infty)})^{l_2},
\end{align*} 
$k$, $l_1\ge 2$, $l_2\ge 1$ depending only on $p$, $q$, $r$ and $s$.
\end{theorem}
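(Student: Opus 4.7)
The plan is to follow the localization strategy developed by Danchin in \cite{Danchin2006} for the density-dependent Navier--Stokes equations. The starting point is the maximal $L^p_tL^r_x$-regularity for the classical Stokes system with constant viscosity on $\Omega$, which is standard. The central obstacle is that after symmetrization and use of $\divv u=0$ one has
\[
\divv\big(\mu(\theta)\mathcal{D}(u)\big)=\mu(\theta)\,\Delta u+\mu'(\theta)\,\nabla\theta\cdot\mathcal{D}(u),
\]
so two bad objects appear: a variable leading-order coefficient $\mu(\theta)$ (to be handled by freezing on small spatial patches), and a first-order remainder $\mu'(\theta)\nabla\theta\cdot\mathcal{D}(u)$ (of lower order, to be treated via interpolation and Gronwall). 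The first mechanism is what produces the patch-count factor $B_\theta^k(t)$ in \eqref{2.3}; the second produces the $C_\theta(t)$ term.

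First I would set up the freezing. Since $q>N$, the Morrey embedding $W^{1,q}(\Omega)\hookrightarrow C^{0,1-N/q}(\bar\Omega)$ yields
\[
|\mu(\theta(t,x))-\mu(\theta(t,y))|\le \bar\mu'\,\|\nabla\theta(t)\|_{L^q}\,|x-y|^{1-N/q}.
\]
Fix $\eta>0$ small so that $\eta/\underline\mu$ lies strictly below the inverse of the constant-viscosity maximal-regularity constant, and pick a patch-radius $\varepsilon$ with $\varepsilon^{1-N/q}\|\nabla\theta\|_{L^\infty_t(L^q)}\sim\eta$, that is $\varepsilon\sim\|\nabla\theta\|_{L^\infty_t(L^q)}^{-q/(q-N)}$. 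Then $\bar\Omega$ can be covered by $M\lesssim\varepsilon^{-N}\sim\|\nabla\theta\|_{L^\infty_t(L^q)}^{Nq/(q-N)}$ open patches $\Omega_j$ with a subordinate partition of unity $\chi_j$; this count $M$ is precisely the source of the power $B_\theta^k(t)$ after iteration.

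On each patch I would localize the unknown to $v_j:=\chi_j u$ and set $\mu_j:=\mu(\theta(t_0,x_j))$ to be the frozen viscosity at a reference point. Then $v_j$ satisfies a constant-viscosity Stokes system whose right-hand side consists of $\chi_j f$, commutator terms involving $\nabla\chi_j$ and $\nabla^2\chi_j$ against $\nabla u$, $u$ (controlled by interpolation between $L^p_tL^r$ and $L^p_tW^{2,r}$), and a frozen-coefficient error $\divv\big((\mu(\theta)-\mu_j)\mathcal{D}(v_j)\big)$ whose $L^p_tL^r$-norm is bounded by $C\eta\big(\|\Delta v_j\|_{L^p_tL^r}+\|\nabla v_j\|_{L^p_tL^r}\big)$, thanks to the H\"older bound above and the choice of $\varepsilon$. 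Choosing $\eta$ small lets one absorb the principal part on the left, and the divergence correction of $v_j$ by a Bogovskii-type operator is classical. Summing the local bounds over the $M$ patches with finite overlap yields the inequality \eqref{2.4}; the seminorm $\|\theta\|_{\dot C^\beta_t L^\infty}$ enters $C_\theta$ precisely because one needs to control the commutator of $\mu(\theta)$ with $\partial_t$ in order to pass from $v_j$-estimates to the bound on $\partial_t u$.

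Finally, to deduce \eqref{2.3} from \eqref{2.4} I would absorb the $C_\theta(t)\,\|u\|_{L^p_t(L^r)}$ term by Gronwall: using the embedding $C([0,t];D^{1-1/p,p}_{A_r})\hookrightarrow C([0,t];L^r)$, the quantity
\[
Y(t):=\|u\|_{C([0,t];D^{1-1/p,p}_{A_r})}+\|(\partial_t u,\Delta u,\nabla P)\|_{L^p_t(L^r)}
\]
satisfies a linear integral inequality whose Gronwall iteration delivers the $e^{CtC_\theta(t)}$ factor. Existence and uniqueness then follow from \eqref{2.3} by a contraction argument: freeze $\theta$ and $\mu$ at the previous step, solve the constant-coefficient Stokes problem at the next step, and iterate on a short time interval. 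The hard part throughout is the bookkeeping: tracking how many times the patch-count factor compounds when one iterates the local estimates, keeping the spatial ($W^{1,q}$) and temporal ($\dot C^\beta$) regularity of $\theta$ in distinct positions of the final bound, and verifying that only $B_\theta$ carries the patch-count power while $C_\theta$ absorbs the H\"older-in-time seminorm, so that \eqref{2.3} and \eqref{2.4} have the stated clean structure.
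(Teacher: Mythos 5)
Your overall strategy matches the paper's: spatial localization with a patch radius calibrated so that the H\"older-in-space oscillation of $\mu(\theta)$ is small, constant-viscosity maximal regularity on each patch (the paper invokes Danchin's Theorem 3.6 for the divergence-nonzero local problems in place of your Bogovskii correction), summing with finite overlap, and Gronwall to trade the $C_\theta(t)\|u\|_{L^p_t(L^r)}$ term for an exponential factor. The paper also proceeds in the same logical order: prove the $\|u\|$-dependent estimate \eqref{2.4} first, then Gronwall to get \eqref{2.3}, then superpose a constant-viscosity solution with data $u_0$ against a zero-data solution to handle general initial data.

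However, your explanation of where the $\dot C^\beta$ seminorm enters is wrong, and this is a genuine gap. There is no commutator of $\mu(\theta)$ with $\partial_t$ anywhere in the argument: multiplying the equation by a spatial cutoff $\chi_j$ gives $\chi_j\partial_t u=\partial_t(\chi_j u)$ exactly. The actual reason the time-H\"older seminorm appears is that freezing the viscosity at a reference time produces the error term $(\mu(\theta(t,\cdot))-\mu(\theta(t_0,\cdot)))\Delta u$, whose $L^\infty_x$ coefficient is only controlled by $|t-t_0|^\beta\|\theta\|_{\dot C^\beta_t(L^\infty)}$. To absorb this into the left-hand side one must \emph{also} localize in time, partitioning $[0,T]$ into intervals of length $\tau\sim\big(B_\theta^{1+\tilde\zeta}(T)\|\theta\|_{\dot C^\beta_T(L^\infty)}\big)^{-1/\beta}$; the number of such intervals, and the $\tau^{-1}\|u\|$ term created by differentiating the temporal cutoff, is exactly what deposits the power of $\|\theta\|_{\dot C^\beta}$ into $C_\theta(t)$. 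The paper keeps the two localizations explicitly separate (a time-independent spatial result, then a temporal partition on top of it); your proposal freezes at a space--time point without stating that a separate temporal partition and an accompanying Gronwall iteration over time-slabs are required, which is the step that would stall a direct attempt to carry out the details.
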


\begin{remark}
As shown in \cite{Danchin2006}, $r>N$ is actually not necessary, we here impose this condition to simplify the index, also this is the case we need in proving our main results. Also as one shall see in the proof, $k\ge 2$ and $k\to\infty$ as $q\to N$, so this estimates does not work for the critical case. Finally notice that $l_2>1$ plays a crucial role in the  proof of global existence of strong solutions.
\end{remark}
The solvability for variable viscosity Stokes equation is well-know in principle, for example we refer to \cite{A, AT, BP, Solo}. Here in order to close the estimates, we need to derive the estimates with explicit dependence on the viscosity. Also for the completeness of our presentation, we give an independent proof in the appendix.
%%%%%%%%%%%%%%%%%%%%
\section{Existence on a small time interval}
Before we proceed, some interpolation inequalities are introduced as preparation. Such inequalities can be also found in \cite{Danchin2006, HW}.
\begin{lemma}\label{lemma3.1}
Under the conditions of Theorem \ref{local}, it holds that
\begin{align}\label{l3.1}
&\|\nabla f\|_{L^p_T(L^\infty)}\le CT^{\frac 12(1-\frac Nr)}\|f\|_{L^\infty_T\big(B^{2-\frac 2 p}_{r, p}\big)}^{\frac p2(1-\frac Nr)}\|f\|_{L^p_T(W^{2,r})}^{1-\frac p2(1-\frac Nr)},\\
& \|\nabla f\|_{L^{2s}_T(L^{2q})}\le CT^{\frac 12(1-\frac 2p+\frac 1s-\frac N r+\frac N{2q})}\|f\|_{L^\infty_T\big(B^{2-\frac 2p}_{r, p}\big)}^{\frac p2(1-\frac Nr+\frac N{2q})}\|f\|_{L^p_T(W^{2,r})}^{1-\frac p2(1-\frac Nr+\frac N{2q})},\label{l3.2}\\
& \|\nabla f\|_{L^{4s}_T(L^{4q})}\le CT^{\frac 12(2-\frac 2p+\frac 1{2s}-\frac Nr+\frac N{4q})}\|f\|_{L^\infty_T\big(B^{3-\frac 2p}_{r, p}\big)}^{\frac p2(2-\frac N r+\frac N{4q})}\|f\|_{L^p_T(W^{3,r})}^{1-\frac p2(2-\frac Nr+\frac N{4q})},\label{l3.21}
\end{align}
where $C$ depends only on $p$, $q$, $r$, $s$ and $\Omega$.
\end{lemma}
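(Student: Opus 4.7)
The plan is a two-step scheme: at each fixed time, interpolate the spatial norm of $\nabla f$ between $W^{k,r}$ (high regularity) and $B^{k-2/p}_{r,p}$ (low regularity), with $k=2$ for \eqref{l3.1} and \eqref{l3.2} and $k=3$ for \eqref{l3.21}; then raise to an appropriate power in time, pull out the $L^\infty_T$-norm of the Besov factor, and apply H\"older's inequality in time. The Besov identity $B^{k-2/p}_{r,p}=(L^r,W^{k,r})_{1-1/p,p}$ noted right before the lemma, together with $r>N$, turns the required pointwise-in-time inequalities into standard Gagliardo--Nirenberg-type bounds on $\Omega$; the exponents on the right-hand sides of the lemma are then forced by matching scalings.

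Concretely, I would first establish the pointwise-in-time estimates
\begin{align*}
\|\nabla f(t)\|_{L^\infty} &\le C\,\|f(t)\|_{B^{2-2/p}_{r,p}}^{a_1}\|f(t)\|_{W^{2,r}}^{1-a_1}, & a_1 &= \tfrac{p}{2}\bigl(1-\tfrac{N}{r}\bigr),\\
\|\nabla f(t)\|_{L^{2q}} &\le C\,\|f(t)\|_{B^{2-2/p}_{r,p}}^{a_2}\|f(t)\|_{W^{2,r}}^{1-a_2}, & a_2 &= \tfrac{p}{2}\bigl(1-\tfrac{N}{r}+\tfrac{N}{2q}\bigr),\\
\|\nabla f(t)\|_{L^{4q}} &\le C\,\|f(t)\|_{B^{3-2/p}_{r,p}}^{a_3}\|f(t)\|_{W^{3,r}}^{1-a_3}, & a_3 &= \tfrac{p}{2}\bigl(2-\tfrac{N}{r}+\tfrac{N}{4q}\bigr),
\end{align*}
each exponent being the unique one that balances the scaling of the target norm ($\lambda^{1-N/m}$ for $L^m$ on $\nabla f$) against those of the two endpoint spaces. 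After this, raising the three inequalities to the $p$-th, $(2s)$-th and $(4s)$-th power respectively and majorizing the Besov factor by its $L^\infty_T$-norm reduces everything to controlling $\int_0^T\|f(t)\|_{W^{k,r}}^{ms(1-a_j)}\,dt$ with $m=1,2,4$.

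The last step is H\"older in time with conjugate exponent $p/(ms(1-a_j))$, which yields the factor $T^{1/(ms)-(1-a_j)/p}\|f\|_{L^p_T(W^{k,r})}^{1-a_j}$; substituting the formulas for $a_j$ returns exactly the $T$-prefactors displayed in \eqref{l3.1}--\eqref{l3.21}. The only real obstacle is the admissibility condition $ms(1-a_j)\le p$ needed to invoke H\"older. For \eqref{l3.1} ($m=1$) this is automatic since $1-a_1\le 1$; for \eqref{l3.2} ($m=2$) it rearranges to $\tfrac{2}{p}+\tfrac{N}{r}\le 1+\tfrac{1}{s}+\tfrac{N}{2q}$, precisely the standing hypothesis of Theorem \ref{local}; and for \eqref{l3.21} ($m=4$) it rearranges to $\tfrac{2}{p}+\tfrac{N}{r}\le 2+\tfrac{1}{2s}+\tfrac{N}{4q}$, which is implied by $\tfrac{2}{p}+\tfrac{N}{r}<2$, itself a direct consequence of the chain $\tfrac{2}{p}+\tfrac{N}{r}<\tfrac{1}{s}+\tfrac{N}{2q}+1<2$. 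Positivity of each $T$-exponent—so that the estimates are genuinely useful as $T\to 0$, which is essential in Section 3—is exactly the strict form of the same index condition, and the proof closes.
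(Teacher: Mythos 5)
Your argument is essentially the paper's: interpolate $\nabla f$ pointwise in time between $W^{k,r}$ and $B^{k-2/p}_{r,p}$, then apply H\"older in time, and the exponent restrictions you record are exactly those the paper verifies (the hypothesis $\tfrac 2p+\tfrac Nr<\tfrac 1s+\tfrac N{2q}+1<2$ of Theorem \ref{local}). The paper makes the pointwise interpolation explicit through embeddings $B^{2-2/p}_{r,p}\hookrightarrow B^{2-2/p-N/r+N/m}_{m,p}$, $W^{2,r}\hookrightarrow B^{2-N/r+N/m}_{m,r}$ followed by real interpolation of $B^{\sigma}_{m,\infty}$ spaces into $B^{s_0}_{m,1}\hookrightarrow L^m$ — you compress this into a ``Gagliardo--Nirenberg with a Besov endpoint'' line, which is fine, though the justification does in the end go through the Besov embedding picture you allude to.

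One small slip to fix: your unified bookkeeping ``$\int_0^T\|f\|_{W^{k,r}}^{ms(1-a_j)}dt$ with $m=1,2,4$'' and the resulting prefactor $T^{1/(ms)-(1-a_j)/p}$ are correct only for $m=2,4$, where the outer Lebesgue exponent in time is $2s$ and $4s$. For \eqref{l3.1} the outer exponent is $p$, not $s$; after raising to the $p$-th power the time integral is $\int_0^T\|f\|_{W^{2,r}}^{p(1-a_1)}dt$, H\"older is with conjugate $1/(1-a_1)$, and the prefactor is $T^{a_1/p}=T^{\frac 12(1-\frac Nr)}$, matching the lemma. Your line ``automatic since $1-a_1\le 1$'' shows you have the right condition in mind; just replace $ms$ by $p$ in the $m=1$ case so the formula produces the stated $T$-power.
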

\begin{proof}
The proof of the lemma is mainly based on the interpolation and H\"older inequality.
Noticing that  $$B^{2-\frac 2p}_{r, p}\hookrightarrow B^{2-\frac 2p-\frac N r}_{\infty,\infty}, ~~W^{2,r}\hookrightarrow B^{2-\frac N r}_{\infty,\infty},$$ 
$$(B^{1-\frac 2p-\frac Nr}_{\infty,\infty}, B^{1-\frac Nr}_{\infty,\infty})_{\gamma, 1}=B^{s_0}_{\infty, 1}\hookrightarrow L^\infty,$$
where $1-\frac N r-\frac 2p(1-\gamma)=s_0\ge 0$, for some $\gamma\in[0,1)$, then it follows that
\begin{align*}
\|\nabla f\|_{L^p_T(L^\infty)}\le& C\Big(\int_0^T\|\nabla f\|_{B^{s_0}_{\infty, 1}}^p\, dt\Big)^{\frac 1p}\\
\le&C\Big(\int_0^T\|\nabla f\|_{B^{1-\frac 2p-\frac Nr}_{\infty,\infty}}^{p(1-\gamma)}\|\nabla f\|_{B^{1-\frac N r}_{\infty,\infty}}^{p\gamma}\,dt\Big )^{\frac 1p}\\
\le&CT^{\frac {1-\gamma}p}\|f\|_{L^\infty_T(B^{2-\frac 2p}_{r, p})}^{1-\gamma}\|f\|_{L^p_T(W^{2,r})}^\gamma.
\end{align*}
Therefore, \eqref{l3.1} is proved.

 Similarly, notice that $B^{2-\frac 2p}_{r, p}\hookrightarrow B^{2-\frac 2p-\frac Nr+\frac N{2q}}_{2q, p}~(2q>r)$ and 
$W^{2,r}\hookrightarrow B^2_{r, r}\hookrightarrow B^{2-\frac N r+\frac N{2q}}_{2q, r}~(r>N\ge 2)$. On the other hand, 
$$(B^{1-\frac 2p-\frac Nr+\frac N{2q}}_{2q, p},B^{1-\frac N r+\frac N{2q}}_{2q, r})_{\gamma, 1}=B^{s_0}_{2q,1}\hookrightarrow L^{2q},$$
where $1-\frac Nr+\frac N{2q}-\frac 2p(1-\gamma)=s_0\ge 0$, for some $\gamma\in[0,1]$. Consequently, 
\begin{align*}
\|\nabla f\|_{L^{2s}_T(L^{2q})}\le& C\Big(\int_0^T\|\nabla f\|_{B^{s_0}_{2q,1}}^{2s}\,dt\Big)^{\frac 1{2s}}\\
\le & C\Big(\int_0^T\|f\|^{2s(1-\gamma)}_{B^{2-\frac 2p}_{r, p}}\|f\|_{W^{2,r}}^{2s\gamma}\,dt\Big)^{\frac 1{2s}}\\
\le& C\|f\|_{L^\infty_T\big(B^{2-\frac 2 p}_{r, p}\big)}^{1-\gamma}\Big(\int_0^T\|f\|_{W^{2,r}}^{2s\gamma\cdot\frac p{2s\gamma}}\,dt\Big)^{\frac 1{2s}\cdot\frac {2s\gamma}{p}}T^{\frac 1{2s}(1-\frac {2s\gamma}p)}\\
\le& CT^{\frac 1{2s}(1-\frac {2s\gamma}p)}\|f\|_{L^\infty_T\big(B^{2-\frac 2p}_{r, p}\big)}^{1-\gamma}\|f\|_{L^p_T(W^{2,r})}^\gamma.
\end{align*}
In the above computation, H\"older inequality is applicable since $p>2s\gamma$, which is guaranteed by our condition in the theorem.

Finally, notice that $B^{3-\frac 2p}_{r, p}\hookrightarrow B^{3-\frac 2p-\frac Nr+\frac N{4q}}_{4q, p}$, $W^{3,r}\hookrightarrow B^{3-\frac N r+\frac N{4q}}_{4q, r}$ and 
$$(B^{2-\frac 2p-\frac Nr+\frac N{4q}}_{4q, p},B^{2-\frac Nr+\frac N{4q}}_{4q, r})_{\gamma,1}=B^{s_0}_{4q,1}\hookrightarrow L^{4q},$$
where $2-\frac Nr+\frac N{4q}-\frac 2p(1-\gamma)=s_0\ge 0$, for some $\gamma\in [0,1]$. Thus,
\begin{align*}
\|\nabla f\|_{L^{4s}_T(L^{4q})}\le & C\Big(\int_0^T\|\nabla f\|_{B^{s_0}_{4q,1}}^{4s}\,dt\Big)^{\frac 1{4s}}\\
\le& C\Big(\int_0^T\|f\|^{4s(1-\gamma)}_{B^{3-\frac 2p}_{r, p}}\|f\|_{W^{3,r}}^{4s\gamma}\,dt\Big)^{\frac 1{4s}}\\
\le& CT^{\frac 1{4s}(1-\frac{4s\gamma}{p})}\|f\|_{L^\infty_T\big(B^{3-\frac 2p}_{r, p}\big)}^{1-\gamma}\|f\|_{L^p_T(W^{3,r})}^{\gamma},
\end{align*}
where we have used $4s\gamma<p$, in other words, $0\le\frac 2p-\frac 1{2s}+\frac Nr-\frac N{4q}<2$, and this is the direct consequence of our condition.
\end{proof}

\begin{remark}
Since $D^{1-\frac 1p, p}_{A_r}\hookrightarrow B^{2-\frac 2p}_{r, p}$, the space  
$B^{2-\frac 2p}_{r, p}$ in \eqref{l3.1} and \eqref{l3.2} could be replaced  by $D^{1-\frac 1p, p}_{A_r}$ accordingly.
\end{remark}

The above lemma is mainly used to deal with the nonlinear terms of the system in the process of proving the local strong solution for general initial data. However, in the global estimates, some uniform in time control on the nonlinear terms  is needed. To this end, we also have the following version of interpolation inequality:

\begin{lemma}\label{lemma3.2}
Under the conditions of Theorem \ref{global}, it holds that
	\begin{align}
		&\|\nabla f\|_{L^p_T(L^\infty)}\le C\|f\|_{L^p_T(W^{2,r})},\label{l3.3}\\
		&\|\nabla f\|_{L^{2s}_T(L^{2q})}\le C\|f\|_{L^\infty_T\big(B^{2-\frac 2p}_{r, p}\big)}^{1-\frac p{2s}}\|f\|_{L^p_T(W^{2,r})}^{\frac p{2s}},\label{l3.4}\\
		&\|\nabla f\|_{L^{4s}_T(L^{4q})}\le C\|f\|_{L^\infty_T\big(B^{3-\frac 2p}_{r, p}\big)}^{1-\frac p{4s}}\|f\|_{L^p_T(W^{3,r})}^{\frac p{4s}},\label{l3.5}
	\end{align}
	where $C$ depends only on $p$, $q$, $r$, $s$ and $\Omega$.
\end{lemma}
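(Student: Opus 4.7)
The plan is to revisit the proof of Lemma \ref{lemma3.1}, keeping the same Besov-space embedding chain but choosing the interpolation parameter $\gamma$ so that the subsequent Hölder inequality in time leaves no residual power of $T$. In Lemma \ref{lemma3.1} the factor $T^\alpha$ is produced by applying Hölder between the time exponents $p$ and $2s\gamma$ (respectively $4s\gamma$); by forcing $2s\gamma=p$ (respectively $4s\gamma=p$), the time Hölder step becomes an exact pairing and the $T$-factor disappears. This is precisely where the extra hypothesis $p\le 2s$ is consumed, because we must then have $\gamma\in[0,1]$.

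For \eqref{l3.3} the argument is immediate: since $r>N$, the Sobolev embedding $W^{2,r}(\Omega)\hookrightarrow W^{1,\infty}(\Omega)$ gives $\|\nabla f(t)\|_{L^\infty}\le C\|f(t)\|_{W^{2,r}}$ pointwise in time, and raising to the $p$th power and integrating on $[0,T]$ yields the bound with a $T$-independent constant. For \eqref{l3.4} I would reuse the embeddings $B^{2-2/p}_{r,p}\hookrightarrow B^{2-2/p-N/r+N/(2q)}_{2q,p}$ and $W^{2,r}\hookrightarrow B^{2-N/r+N/(2q)}_{2q,r}$, together with the real interpolation
\[
(B^{1-2/p-N/r+N/(2q)}_{2q,p},B^{1-N/r+N/(2q)}_{2q,r})_{\gamma,1}=B^{s_0}_{2q,1}\hookrightarrow L^{2q},
\]
where $s_0=1-N/r+N/(2q)-\tfrac{2}{p}(1-\gamma)\ge 0$, to obtain the stationary interpolation $\|\nabla f(t)\|_{L^{2q}}\le C\|f(t)\|_{B^{2-2/p}_{r,p}}^{1-\gamma}\|f(t)\|_{W^{2,r}}^{\gamma}$. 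Selecting $\gamma=p/(2s)$, raising to the $2s$th power, pulling out $\|f\|_{L^\infty_T(B^{2-2/p}_{r,p})}^{2s(1-\gamma)}$, and using the exact identification $2s\gamma=p$ then delivers \eqref{l3.4} without any $T$-dependent factor. The estimate \eqref{l3.5} is handled identically, using $B^{3-2/p}_{r,p}\hookrightarrow B^{3-2/p-N/r+N/(4q)}_{4q,p}$, $W^{3,r}\hookrightarrow B^{3-N/r+N/(4q)}_{4q,r}$ and the parameter $\gamma=p/(4s)$, which is admissible since $p\le 2s\le 4s$.

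The main obstacle is the index bookkeeping: one has to check that the chosen $\gamma$ lies in $[0,1]$ and that the target regularity $s_0$ remains nonnegative so the embeddings into $L^{2q}$ and $L^{4q}$ are valid. For \eqref{l3.4} the condition $\gamma=p/(2s)\in[0,1]$ is precisely the new hypothesis $p\le 2s$, while $s_0\ge 0$ reduces to $2/p+N/r\le 1+1/s+N/(2q)$, which is already encoded in the strict index condition of Theorem \ref{local}; for \eqref{l3.5} both requirements follow \emph{a fortiori} from the same index set. Beyond this bookkeeping, no new analytical difficulty arises compared with Lemma \ref{lemma3.1}; the sole structural novelty is the removal of the $T$-factor via the exponent match $2s\gamma=p$.
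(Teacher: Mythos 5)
Your proof is correct, and in fact it does something genuinely different from the paper. The paper first applies the log-convexity of $L^p$ norms in \emph{time},
\[
\|\nabla f\|_{L^{2s}_T(L^{2q})}\le \|\nabla f\|_{L^\infty_T(L^{2q})}^{1-\frac p{2s}}\|\nabla f\|_{L^p_T(L^{2q})}^{\frac p{2s}},
\]
and then controls each factor with a time-independent Sobolev embedding, invoking $B^{2-2/p}_{r,p}\hookrightarrow W^{1,2q}$ for the $L^\infty_T$ factor and $W^{2,r}\hookrightarrow W^{1,2q}$ for the $L^p_T$ factor. You instead keep the entire Besov interpolation chain from Lemma~\ref{lemma3.1} at each fixed time, obtaining
$\|\nabla f(t)\|_{L^{2q}}\le C\|f(t)\|_{B^{2-2/p}_{r,p}}^{1-\gamma}\|f(t)\|_{W^{2,r}}^{\gamma}$,
and then choose $\gamma$ so that the subsequent H\"older step in time is an exact pairing ($2s\gamma=p$, resp.\ $4s\gamma=p$), killing the $T$-factor. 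Both routes are clean, but they are not equivalent in the index bookkeeping: the paper's route needs $B^{2-2/p}_{r,p}\hookrightarrow W^{1,2q}$, which forces
$\tfrac 2p+\tfrac Nr<1+\tfrac N{2q}$,
a condition strictly stronger than the theorem's hypothesis $\tfrac 2p+\tfrac Nr<\tfrac 1s+\tfrac N{2q}+1$; in particular, for the ``admissible'' choice $p=s=2$, $q=r>N$ pointed out in the paper's Remark, the condition $\tfrac 2p+\tfrac Nr<1+\tfrac N{2q}$ reduces to $\tfrac Nr<\tfrac N{2r}$, which fails. Your argument only needs $s_0\ge 0$, i.e.\
$\tfrac 2p+\tfrac Nr\le 1+\tfrac 1s+\tfrac N{2q}$,
which is exactly what the theorem guarantees, together with $\gamma=p/(2s)\in[0,1]$ (respectively $p/(4s)\in[0,1]$), supplied by $p\le 2s$. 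So your proof is not merely an alternative: it closes under the stated hypotheses, whereas the paper's proof, as written, implicitly uses a slightly stronger embedding condition.
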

\begin{proof}
\eqref{3.4} immediately follows from the fact that $W^{2, r}\hookrightarrow W^{1,\infty}$ as $r>N$.
Secondly, since $p\le 2s$, by the log-convexity of $L^p$ norms (for example see page 27 in \cite{Adam}),
\begin{align*}
\|\nabla f\|_{L^{2s}_T(L^{2q})}\le \|\nabla f\|_{L^\infty_T(L^{2q})}^{1-\frac p{2s}}\|\nabla f\|_{L^p_T(L^{2q})}^{\frac p{2s}}\le C\|f\|_{L^\infty_T\big(B^{2-\frac 2p}_{r, p}\big)}^{1-\frac p{2s}}\|f\|_{L^p_T(W^{2,r})}^{\frac p{2s}},
\end{align*}
where we have used the fact that $B^{2-\frac 2p}_{r, p}\hookrightarrow W^{1, 2q}$ as $\frac 2p+\frac N r<1+\frac N{2q}$ and $W^{2, r}\hookrightarrow W^{1, 2q}$ as $r>N$, so \eqref{l3.4} is proved. By the same token, one can easily check \eqref{l3.5}.
\end{proof}

Next, we begin to prove the existence of local strong solution through an iteration method. The proof will be divided into the following steps. 

{\bf{Step 1: Construction of approximate solution.}} We initialize the construction of approximate solution by setting $u^0:=u_0$, $d^0:=d_0$ and $\theta^0:=\theta_0$. Given $(u^n, d^n, P^n, \theta^n)\in E^{p, q, r, s}_T$ for some $T>0$, Theorem \ref{thm2.1} enables us to define $\theta^{n+1}$ as the unique solution of the system
\begin{align}\label{3.1}
\begin{cases}
& \partial_t\theta^{n+1}-\Delta\theta^{n+1}\\
&\quad=-u^n\cdot\nabla\theta^n+\frac 12\mu(\theta^n)|\mathcal{D}(u^n)|^2+\big|\Delta \bar d^n+|\nabla \bar d^n|^2(\bar d^n+{\bf{e}})\big|^2,\\
& \theta^{n+1}|_{t=0}=\theta_0,\quad\partial_\nu\theta^{n+1}|_{\partial\Omega}=0,
\end{cases}
\end{align}
on $\Omega_T$, where  $\bar d^{n}:=d^{n}-{\bf{e}}$. Then by Theorem \ref{thm2.1}, define $\bar d^{n+1}$ as the unique solution of system
\begin{align}\label{3.2}
\begin{cases}
\partial_t \bar d^{n+1}-\Delta\bar d^{n+1}=-u^n\cdot\nabla \bar d^n+|\nabla \bar d^n|^2(\bar d^n+{\bf {e}}),\\
\bar d^{n+1}|_{t=0}=d_0-{\bf{e}},\quad \bar d^{n+1}|_{\partial\Omega}=0\quad\text{or}\quad\partial_\nu\bar d^{n+1}|_{\partial\Omega}=0.
\end{cases}
\end{align}
Finally, Theorem \ref{thm2.4} and \eqref{1.4} enables us to define $(u^{n+1}, P^{n+1})$ by $(u^n, d^n,\theta^{n+1})$ as the unique global solution of 
\begin{align}\label{3.3}
\begin{cases}
& \partial_t u^{n+1}-\divv\big(\mu(\theta^{n+1})\mathcal{D} (u^{n+1})\big)+\nabla P^{n+1}=-u^n\cdot\nabla u^n-\Delta \bar d^n\cdot\nabla \bar d^n,\\
& \divv u^{n+1}=0,\quad \int_\Omega P^{n+1}\,dx=0,\\
& u^{n+1}|_{t=0}=u_0,\quad u^{n+1}|_{\partial\Omega}=0.\\
\end{cases}
\end{align}
Also Theorem \ref{thm2.1} and \ref{thm2.4} yield that $(u^{n+1}, \bar d^{n+1}, P^{n+1}, \theta^{n+1})\in E^{p, q, r, s}_T$.

{\bf{Step 2: Uniform bounds for some small fixed time $T$.}} In this step, we aim at finding a positive time $T$ independent of $n$ for which $(u^n, \bar d^n, P^n, \theta^n)_{n\in\mathbb{N}}$ is uniformly bounded in the Banach space $E^{p, q, r, s}_T$.

In order to keep our presentation brief, let us denote
\begin{align*}
& {U}_n(t):= \|u^n\|_{L^\infty_t\big(D^{1-\frac 1p, p}_{A_r}\big)}+\|u^n\|_{W^{1,p}_t(L^r)\cap L^p_t(W^{2,r})}+\|P^n\|_{L^p_t(W^{1,r})},\\
&{D}_n(t):=\|\bar d^n\|_{L^\infty_t\big(B^{3-\frac 2p}_{r, p}\big)}+\|\bar d^n\|_{W^{1,p}_t(W^{1,r})\cap L^p_t(W^{3,r})},\\
& \Theta_n(t):=\|\theta^n\|_{L^\infty_t\big(B^{2-\frac 2 s}_{q, s}\big)}+\|\theta^n\|_{W^{1,s}_t(L^q)\cap L^s_t(W^{2,q})},\\
&{U}_0:=\|u_0\|_{D^{1-\frac 1p, p}_{A_r}},\quad{D}_0:=\|d_0-{\bf{e}}\|_{B^{3-\frac 2p}_{r, p}},\quad\Theta_0:=\|\theta_0\|_{B^{2-\frac 2 s}_{q, s}},\\
&{E}_n(t):={U}_n(t)+{D}_n(t),\quad {E}_0:= {U}_0+{D}_0.
\end{align*}

Then by Theorem \ref{thm2.1}, it holds that
\begin{equation}\label{3.4}
\begin{split}
\Theta_{n+1}(t)\le &C\big(\Theta_0+\underbrace{\|u^n\cdot\nabla\theta^n\|_{L^s_t(L^q)}}_{I_1}+\underbrace{\big\||\nabla u^n|^2\big\|_{L^s_t(L^q)}}_{I_2}\\
&+\underbrace{\big\|\big |\Delta\bar d^n+|\nabla\bar d^n|^2(\bar d^n+{\bf{e}})\big|^2\big\|_{L^s_t(L^q)}}_{I_3}\big),
\end{split}
\end{equation}
for any $t>0$.
Next, we evaluate the terms on the RHS of \eqref{3.4} one by one. It is noted that in the following the constant $\gamma\in [0,1]$ may vary in different inequalities and its value does not play a role in our analysis, thus from now on we do not distinguish them in notation unless otherwise claimed.

The first term on the RHS of \eqref{3.4} can be estimated as
\begin{align}{\label{3.5}}
\begin{split} 
I_1\le& C\|u^n\|_{L^\infty_t(L^q)}\|\nabla\theta^n\|_{L^s_t(L^\infty)}\\
\le& Ct^{\frac 12(1-\frac Nq)}\|u^n\|_{L^\infty_t\big(D^{1-\frac 1p, p}_{A_r}\big)}\|\theta^n\|_{L^\infty_t(B^{2-\frac 2s}_{q, s})}^{1-\gamma}\|\theta^n\|_{L^s_t(W^{2,q})}^\gamma\\
\le& Ct^{\frac 12(1-\frac Nq)}{U}_n(t)\Theta_n(t),
\end{split}
\end{align}
where we have used the fact $D^{1-\frac 1p, p}_{A_r}\hookrightarrow L^q$ as $\frac 2p+\frac N r<2+\frac N q$ and inequality \eqref{l3.1}.

For $I_2$, it follows from \eqref{l3.2} that
\begin{align}{\label{3.6}}
\begin{split}
I_2\le& C\|\nabla u^n\|_{L^{2s}_t(L^{2q})}^2\\
\le& C t^{1-\frac 2p+\frac 1s-\frac N r+\frac N{2q}}\|u^n\|_{L^\infty_t\big(D^{1-\frac 1p, p}_{A_r}\big)}^{2(1-\gamma)}\|u^n\|_{L^p_t(W^{2,r})}^{2\gamma}\\
\le& C t^{1-\frac 2p+\frac 1s-\frac N r+\frac N{2q}}{U}_n^2(t).
\end{split}
\end{align}
Next, we evaluate the last term as 
\begin{align*}
\begin{split}
I_3\le& C\big(\|\Delta\bar d^n \|_{L^{2s}_t(L^{2q})}^2+\big\| |\nabla \bar d^n|^2(\bar d^n+{\bf{e}})\big\|_{L^{2s}_t(L^{2q})}^2\big)\\
\le& C\big(\|\Delta \bar d^n\|_{L^{2s}_t(L^{2q})}^2+\|\nabla\bar d^n\|_{L^{4s}_t(L^{4q})}^4(\|\bar d^n\|_{L^\infty_t(L^\infty)}^2+1)\big).
\end{split}
\end{align*}
Since
\begin{align*}
\begin{split}
\|\Delta \bar d^n\|_{L^{2s}_t(L^{2q})}\le& C t^{\frac 12(1-\frac 2p+\frac 1s-\frac N r+\frac N{2q})}\|\nabla \bar d^n\|^{1-\gamma}_{L^\infty_t\big(B^{2-\frac 2p}_{r, p}\big)}\|\nabla\bar d^n\|_{L^p_t(W^{2,r})}^\gamma\\
\le & Ct^{\frac 12(1-\frac 2p+\frac 1s-\frac N r+\frac N{2q})}{D}_n(t),\\
\|\nabla\bar d^n\|_{L^{4s}_t(L^{4q})}\le&C t^{\frac 12(2-\frac 2p+\frac 1{2s}-\frac Nr+\frac N{4q})}\|\bar d^n\|_{L^\infty_t\big(B^{3-\frac 2p}_{r, p}\big)}^{1-\gamma}\|\bar d^n\|_{L^p_t(W^{3,r})}^{\gamma}\\
\le& C t^{\frac 12(2-\frac 2p+\frac 1{2s}-\frac Nr+\frac N{4q})}{D}_n(t),\\
\|\bar d^n\|_{L^\infty(L^\infty)}\le& C\|\bar d^n\|_{L^\infty\big(B^{3-\frac 2p}_{r, p}\big)}\le C{D}_n(t),
\end{split}
\end{align*}
where we have used inequality \eqref{l3.21} and $B^{3-\frac 2p}_{r, p}\hookrightarrow L^\infty$ as $\frac 2p+\frac Nr<3$. 
Thus, 
\begin{equation}\label{3.8}
I_3\le Ct^{1-\frac 2p+\frac 1s-\frac N r+\frac N{2q}}{D}_n^2(t)+C t^{2(2-\frac 2p+\frac 1{2s}-\frac Nr+\frac N{4q})}{D}_n^4(t)(1+{D}_n^2(t)).
\end{equation}

Substituting \eqref{3.5}, \eqref{3.6} and \eqref{3.8}  into \eqref{3.4} yields that
\begin{equation}\label{3.12}
\Theta_{n+1}(t)\le C\big(\Theta_0+t^{\xi_1}(\Theta^2_n(t)+{E}_n^2(t)+{E}_n^6(t))\big),
\end{equation}
where $\xi_1=\min\{\frac 12(1-\frac Nq),1-\frac 2p+\frac 1s-\frac Nr+\frac N{2q},2(2-\frac 2p+\frac 1{2s}-\frac Nr+\frac N{4q})\}>0$.

Next, we move on to evaluate ${D}_{n+1}(t)$. Applying Theorem \ref{thm2.1} to \eqref{3.2}, one obtains
\begin{equation}\label{3.13}
{D}_{n+1}(t)\le C\big({D}_0+\underbrace{\|u^n\cdot\nabla \bar d^n\|_{L^p_t(W^{1,r})}}_{{II}_1}+\underbrace{\big\| |\nabla \bar d^n|^2(\bar d^n+{\bf{e}})\big\|_{L^p_t(W^{1,r})}}_{II_2}\big).
\end{equation}
For the first term on the RHS of \eqref{3.13}, 
\begin{align}\label{3.17}
II_1\le& \|u^n\cdot\nabla \bar d^n\|_{L^p_t(L^r)}+\|\nabla u^n\cdot\nabla \bar d^n\|_{L^p_t(L^r)}+\|u^n\cdot\nabla^2 \bar d^n\|_{L^p_t(L^r)}\nonumber\\
\le & C(\|u^n\|_{L^\infty_t(L^r)}\|\nabla\bar d^n\|_{L^p_t(W^{1,\infty})}+\|\nabla u^n\|_{L^p_t(L^\infty)}\|\nabla\bar d^n\|_{L^\infty_t(L^r)})\\
\le & C t^{\frac 12(1-\frac Nr)}U_n(t)D_n(t).\nonumber
\end{align}
For the last term on the RHS of \eqref{3.13}, by \eqref{l3.3}, one can obtain
\begin{align}{\label{3.18}}
II_2\le &\big\| |\nabla \bar d^n|^2(\bar d^n+{\bf{e}})\big\|_{L^p_t(L^r)}+2\|\nabla^2 \bar d^n\nabla \bar d^n (\bar d^n+{\bf{e}})\|_{L^p_t(L^r)}+\big\| |\nabla \bar d^n|^3\big\|_{L^p_t(L^r)}\nonumber\\
\le& C(1+\|\bar d^n\|_{L^\infty_t(W^{1,\infty})})\|\nabla\bar d^n\|_{L^\infty_t(L^r)}\|\nabla\bar d^n\|_{L^p_t(W^{1,\infty})}\\
\le & Ct^{\frac 12(1-\frac Nr)} {D}_n^2(t)(1+{D}_n(t))\nonumber.
\end{align}
Therefore, substituting \eqref{3.17} and \eqref{3.18} into \eqref{3.13} gives that
\begin{equation}\label{3.19}
{D}_{n+1}(t)\le C\big({D}_0+t^{\xi_2}({E}_n^2(t)+{E}_n^3(t))\big),
\end{equation}
for $\xi_2=\frac 12(1-\frac Nr)$.

Finally, applying Theorem \ref{thm2.4} to \eqref{3.3}, we get
\begin{align}{\label{3.26}}
{U}_{n+1}(t)\le & C (1+\|\nabla\theta^{n+1}\|_{L^\infty_t(L^{q})})^{\varsigma}\exp\big(Ct(1+\|\theta^{n+1}\|_{\dot{C}^\beta_t(L^\infty)}+\|\nabla\theta\|_{L^\infty_t(L^q)})^\varsigma\big)\nonumber\\
&\times({U}_0+\|u^n\cdot\nabla u^n\|_{L^p_t(L^r)}+\|\Delta \bar d^n\cdot\nabla\bar d^n\|_{L^p_t(L^r)}),
\end{align}
for some $\beta\in (0,1)$ and $1<\varsigma<\infty$ depending on $p$, $q$, $r$ and $s$.

Since $s\ge2$ and $q>N$, by Sobolev embedding,
\begin{equation*}
W^{1,s}_t(L^q)\cap L^s(W^{2,q})\hookrightarrow L^\infty_t(W^{1,q})\cap \dot{C}^\beta_t(L^\infty),\quad\beta=1-\frac 1s-\frac N{2q}\in(0,1).
\end{equation*}
Thus, \eqref{3.26} reduces to
\begin{align}\label{3.27}
\begin{split}
{U}_{n+1}(t)\le & C (1+\Theta_{n+1}(t))^\varsigma \exp \big(Ct(1+\Theta_{n+1}(t))^\varsigma\big)\\
&\times\big({U}_0+\underbrace{\|u^n\cdot\nabla u^n\|_{L^p_t(L^r)}}_{III_1}+\underbrace{\|\Delta \bar d^n\cdot\nabla \bar d^n\|_{L^p_t(L^r)}}_{III_2}\big).
\end{split}
\end{align}
By the interpolation inequality \eqref{l3.1}, it follows that
\begin{align}
III_1\le & C\|u^n\|_{L^\infty_t(L^r)}\|\nabla u^n\|_{L^p_t(L^\infty)}\nonumber\\
\le & C t^{\frac 12(1-\frac Nr)}\|u^n\|_{L^\infty_t\big(D^{1-\frac 1p, p}_{A_r}\big)}^{2-\gamma}\|u^n\|_{L^p_t(W^{2,r})}^\gamma\nonumber\\
\le & C t^{\frac 12(1-\frac Nr)}{U}_n^2(t),\label{3.28}\\
III_2\le & C\|\Delta\bar d^n\|_{L^p_t(L^\infty)}\|\nabla\bar d^n\|_{L^\infty_t(L^r)}\nonumber\\
\le & Ct^{\frac 12(1-\frac Nr)}\|\bar d^n\|_{L^\infty_t\big(B^{3-\frac 2p}_{r, p}\big)}^{2-\gamma}\|\bar d^n\|_{L^p_t(W^{3,r})}^\gamma\nonumber\\
\le & Ct^{\frac 12(1-\frac Nr)}{D}_n^2(t).\label{3.29}
\end{align}
Substituting \eqref{3.28} and \eqref{3.29} into \eqref{3.27}, one reaches
\begin{align}\label{3.30}
{U}_{n+1}(t)\le& C(1+\Theta_{n+1}(t))^\varsigma\exp\big(Ct(1+\Theta_{n+1}(t))^\varsigma\big)\big({U}_0+t^{\xi_2}{E}_n^2(t)\big).
\end{align}

Adding up \eqref{3.19} and \eqref{3.30}, one infers that
\begin{equation}\label{3.31}
{E}_{n+1}(t)\le C(1+\Theta_{n+1}(t))^\varsigma \exp\big(Ct(1+\Theta_{n+1}(t))^\varsigma\big)\big({E}_0+t^{\xi_2} ({E}_n^3(t)+E_n^2(t))\big).
\end{equation}
Assume that for some $T>0$ such that for any $t\in[0, T]$,
\begin{equation*}
\Theta_n(t)\le CM_1(\Theta_0+E_0),\quad E_n(t)\le CM_2(\Theta_0+E_0),
\end{equation*}
where $M_1$ and $M_2$ are some constants independent of $T$ and to be determined later.

Choosing $0<T_1\le T$ such that 
\begin{equation}
T_1^{\xi_1}\big(M_1^2C^2(\Theta_0+E_0)+M_2^2C^2(\Theta_0+E_0)+M_2^6C^6(\Theta_0+E_0)^5\big)\le 1,
\end{equation}
then for any $t\in [0, T_1]$, it follows from \eqref{3.12} that
\begin{equation}
\Theta_{n+1}(t)\le 2C(\Theta_0+E_0).
\end{equation}
Choosing $0<T_2\le T_1$ such that
\begin{align}
\begin{cases}
& CT_2(1+2C(\Theta_0+E_0))^\varsigma\le\ln 2,\\
&T_2^{\xi_2}\big(M_2^2C^2(\Theta_0+E_0)+M_2^3C^3(\Theta_0+E_0)^2\big)\le 1,
\end{cases}
\end{align}
then for any $t\in[0, T_2]$,
\begin{equation}
E_{n+1}(t)\le 4C\big(1+2C(\Theta_0+E_0)\big)^\varsigma(\Theta_0+E_0).
\end{equation}
Now choosing 
\begin{equation*}
M_1=2,\quad M_2=4\big(1+2C(\Theta_0+E_0)\big)^\varsigma,
\end{equation*}
thus one has
\begin{equation*}
\Theta_{n+1}(t)\le CM_1(\Theta_0+E_0),\quad E_{n+1}(t)\le CM_2(\Theta_0+E_0),\quad\forall~t\in[0, T_2]
\end{equation*}
Then by the induction argument, $(u^n, d^n, P^n, \theta^n)$   is uniformly bounded in $E^{p, q, r, s}_{T_2}$ with respect to $n$.

{\bf{Step 3: Convergence of sequence in $E^{p, q, r, s}_{T}$ for some $T<T_2$.}} In this step, we are devoted to proving that $(u^n, d^n, P^n, \theta^n)$ is a Cauchy sequence in the Banach space $E^{p, q, r, s}_T$ for sufficiently small $T<T_2$.

Let $\delta u^n:=u^{n+1}-u^n$, $\delta P^n:=P^{n+1}-P^n$, $\delta d^n:=d^{n+1}-d^n$, $\delta \theta^n:=\theta^{n+1}-\theta^n$, $\delta\mu^n:=\mu(\theta^{n+1})-\mu(\theta^n)$, $\delta\Theta_n:=\Theta_{n+1}-\Theta_n$ and $\delta{E}_n:={E}_{n+1}-{E}_n$. Then $(\delta u^n, \delta d^n, \delta P^n, \delta \theta^n)$ satisfies the system
\begin{align}\label{3.35}
\begin{cases}
& \partial_t\delta u^n-\divv\big(\mu(\theta^{n+1})\mathcal{D}(\delta u^n)\big)+\nabla \delta P^n\\
&\quad=\divv(\delta\mu^n\mathcal{D}(u^n))-\delta u^{n-1}\cdot\nabla u^n-u^{n-1}\cdot\nabla\delta u^{n-1}\\
&\quad\quad-\Delta d^n\cdot\nabla \delta d^{n-1}-\Delta \delta d^{n-1}\cdot\nabla d^{n-1},\\
&\partial_t\delta d^n-\Delta \delta d^n=|\nabla d^n|^2\delta d^{n-1}+\nabla d^n\nabla \delta d^{n-1} d^{n-1}\\
&\quad +\nabla \delta d^{n-1}\nabla d^{n-1} d^{n-1}-\delta u^{n-1}\cdot\nabla d^n-u^{n-1}\cdot\nabla\delta d^{n-1},\\
&\partial_t\delta \theta^n-\Delta \delta\theta^n=-u^n\cdot\nabla\delta\theta^{n-1}-\delta u^{n-1}\cdot\nabla\theta^{n-1}\\
&\quad+\frac 12\delta\mu^{n-1}|\mathcal{D}(u^n)|^2+\frac 12\mu^{n-1}\mathcal{D}(\delta u^{n-1}):(\mathcal{D}(u^n)+\mathcal{D}(u^{n-1}))\\
&\quad+\big(\Delta d^n+|\nabla d^n|^2d^n+\Delta d^{n-1}+|\nabla d^{n-1}|^2d^{n-1}\big)\\
&\quad\times\big(\Delta\delta d^{n-1}+|\nabla d^n|^2\delta d^{n-1}+(\nabla d^n+\nabla d^{n-1})\nabla\delta d^{n-1}d^{n-1}\big)\\
& \divv \delta u^n=0,\quad \int_\Omega \delta P^n\, dx=0,\\
& (\delta u^n, \delta d^n, \delta \theta^n)|_{t=0}=(0,0,0),\quad (\delta u^n,\mathcal{B}\delta d^n, \partial_\nu\delta\theta^n)|_{\partial\Omega}=(0,0,0).
\end{cases}
\end{align}

Applying Theorem \ref{thm2.1} and Theorem \ref{thm2.4} to system \eqref{3.35}, mimicking the process in the second step and noticing that $\Theta_n(t)+{E}_{n}(t)\le C(\Theta_0+E_0)$, for any $t<T_2$ and $n\in\mathbb{N}$, one obtains
\begin{align}
\begin{cases}
& \delta\Theta_n(t)\le Ct^{\xi_1}(\delta\Theta_{n-1}(t)+\delta E_{n-1}(t)),\\
& \delta E_n(t)\le Ct^{\xi_2}(\delta\Theta_{n-1}(t)+\delta E_{n-1}(t)),
\end{cases}
\end{align}
for any $t<T_2$, $C$ depends on the domain, $p$, $q$, $r$, $\underline\mu$, $\bar\mu$, $\overline{\mu}'$, $\Theta_0$ and ${E}_0$.
Choosing $T_3\in(0, T_2]$ such that 
\begin{equation}\label{time2}
C(T_3^{\xi_1}+T_3^{\xi_2})\le\frac 12,
\end{equation}
then for any $t<T_3$, it holds that
\begin{equation*}
\delta\Theta_n(t)+\delta{E}_n(t)\le \frac 12(\delta\Theta_{n-1}(t)+\delta{E}^{n-1}(t)).
\end{equation*}
Therefore, $(\delta u^n,\delta d^n,\delta P^n,\delta \theta^n)$ is a Cauchy sequence in the Banach space $E_{T}^{p, q, r, s}$ for any $T\le T_3$. 

{\bf{Step 4: Verifying that the limit is a local strong solution.}} Let $(u, d, P, \theta)\in E^{p, q, r, s}_T$ be the limit of $(u^n, d^n, P^n, \theta^n)_{n\in\mathbb{N}}$  for $T\le T_3$.

We claim that all the nonlinear terms of \eqref{3.1} converge to their corresponding terms of \eqref{1.1} in $L^s_{T_3}(L^q)$. We take one term as an example,
\begin{align*}
 &\big\|\mu(\theta^n)|\nabla u^n|^2-\mu(\theta)|\nabla u|^2\big\|_{L^s_{T_3}(L^q)}\\
 &\quad\le 2\bar\mu\big\|\nabla u^n-\nabla u\big\|_{L^{2s}_{T_3}(L^{2q})}\Big(\|\nabla u\|_{L^{2s}_{T_3}(L^{2q})}+\|\nabla u^n\|_{L^{2s}_{T_3}(L^{2q})}\Big)\\
 &\quad\quad+\bar{\mu'}\big\|(\theta^n-\theta)|\nabla u^n|^2\big\|_{L^s_{T_3}(L^q)}\\
 &\quad\le  C\big({E}_n(T_3)+{E}(T_3)\big)\big(\|u^n-u\|_{E^{p, q, r, s}_{T_3}}+\|\theta^n-\theta\|_{E^{p, q, r, s}_{T_3}}\big)\\
 &\quad \longrightarrow 0,\quad\text{as}\quad n\to \infty.
\end{align*}
The convergence of the rest terms can be proved in the same spirit as above. Similarly, the nonlinear terms of \eqref{3.2} converges in the space $L^p_{T_3}(W^{1,r})$ and ones of \eqref{3.3} converges in the space  $L^p_{T_3}(L^r)$. Therefore, we can perform the limiting process and it's easy to verify that the limits indeed satisfy system \eqref{1.1} with \eqref{1.2} almost everywhere.

Next, we check that $|d|=1$ {\it a.e.} on $\Omega_{T_3}$. In particular, consider the equations
\begin{align}\label{3.43}
\begin{cases}
&\partial_ t d-\Delta d+u\cdot\nabla d=|\nabla d|^2d,
\\&d|_{t=0}=d_0,\quad\text{on}\quad \Omega,\\
&d=d_0,\quad\text{or}\quad\partial_\nu d=0,\quad\text{on}\quad \partial\Omega\times[0,T_3). 
\end{cases}
\end{align} 
Since $d\in L^\infty_{T_3}\big(B^{3-\frac 2p}_{r, p}\big)\cap L^p_{T_3}(W^{3,r})$, multiplying \eqref{3.43} by $d$ and using the fact that $\Delta(|d|^2)=2\Delta d\cdot d+2|\nabla d|^2$, one obtains
\begin{align}\label{3.44}
\begin{cases}
&\partial_t (|d|^2-1)-\Delta (|d|^2-1)+u\cdot\nabla (|d|^2-1)=2|\nabla d|^2(|d|^2-1),\\
& (|d|^2-1)|_{t=0}=0,\quad\text{on}\quad  \Omega,\\
&|d|^2-1=0,\quad\text{or}\quad\partial_\nu(|d|^2-1)=0, \quad\text{on}\quad \partial\Omega\times[0,T_3).
\end{cases}
\end{align}
Multiplying \eqref{3.44} by $|d|^2-1$, then integrating the resulting equation over $\Omega$, one has
\begin{equation}
\frac12\dfrac d{dt}\int_\Omega\big(|d|^2-1\big)^2dx+\int_{\Omega}|\nabla(|d|^2-1)|^2dx=2\int_\Omega |\nabla d|^2(|d|^2-1)^2dx.
\end{equation}  
Notice that if $p\ge2$, $L^p_{T_3}(W^{3,r})\hookrightarrow L^2_{T_3}(W^{1,\infty})$, and if $1<p<2$, $W^{1,p}_{T_3}(W^{1,r})\cap L^p_{T_3}(W^{3,r})\hookrightarrow L^2_{T_3}(W^{1,\infty})$. Thus, by Gronwall's inequality, one has
\begin{equation}
\int_\Omega(|d|^2-1)^2(t, x)dx\le \exp \big(C\int_0^t\|\nabla d(s,\cdot)\|_{L^\infty}^2ds\big)\int_\Omega(|d|^2-1)^2(0,x)dx=0,
\end{equation} 
 for any $t<T_3$. This implies that $|d|=1$ {\it a.e.} on $\Omega_{T_3}$.

Finally, the existence of local strong solution is proved. The proof of the uniqueness  and continuity is standard, here we omit the details.

\section{Global existence for small perturbation}
 It is showed in the last section in implicit form that if the initial data $\Theta_0$ and  $E_0$ are smaller, the lifespan of local strong solutions is longer. In this section we are going to prove that actually for sufficiently small perturbation around the trivial equilibrium state $(0,{\bf{e}}, 0)$, the strong solution is global.

\begin{proof}[Proof of Theorem \ref{global}]
Suppose $T^*$ is the maximal existence time and fix $t<T^*$.
Define
\begin{align*}
&{U}(t):=\|u\|_{L^\infty_t\big(D^{1-\frac 1p, p}_{A_r}\big)}+\|u\|_{W^{1,p}_t(L^r)\cap L^p_t(W^{2,r})}+\|P\|_{L^p_t(W^{1,r})},\\
&{D}(t):=\|\bar d\|_{L^\infty_t\big(B^{3-\frac 2p}_{r, p}\big)}+\|\bar d\|_{W^{1,p}_t(W^{1,r})\cap L^p_t(W^{3,r})},\\
&\Theta(t):=\|\theta\|_{L^\infty_t\big(B^{2-\frac 2s}_{q, s}\big)}+\|\theta\|_{W^{1,s}_t(L^q)\cap L^s_t(W^{2,q})},\\
&F(t):={U}(t)+{D}(t)+\Theta(t),\quad F_0:=U_0+D_0+\Theta_0,
\end{align*}
where ${U}_0$, $D_0$ and $\Theta_0$ are defined as before, $\bar d:=d-{\bf{e}}$.

Applying Theorem \ref{thm2.1} to the temperature equation in system \eqref{1.1}, one has
\begin{align}\label{4.1}
\begin{split}
\Theta(t)\le& C\big(\Theta_0+\|u\cdot\nabla\theta\|_{L^s_t(L^q)}+\bar\mu\big\| |\nabla u|^2\big\|_{L^s_t(L^q)}\\
&+\big\| |\Delta\bar d+|\nabla\bar d|^2(\bar d+{\bf{e}})|^2\big\|_{L^s_t(L^q)}\big)\\
\le& C\big(\Theta_0+{U}(t)\Theta(t)+{U}^2(t)+{D}^2(t)+{D}^6(t)\big),
\end{split}
\end{align}
where we have used Lemma \ref{lemma3.2}. Then applying Theorem \ref{thm2.1} to the second equation of \eqref{1.1}, it follows that
\begin{align}\label{4.2}
{D}(t)\le& C\big({D}_0+\|u\cdot\nabla\bar d\|_{L^p_t(W^{1,r})}+\big\| |\nabla\bar d|^2(\bar d+{\bf e}) \big\|_{L^p_t(W^{1,r})}\big)\nonumber\\
\le& C({D}_0+\|u\|_{L^\infty_t(L^r)}\|\nabla\bar d\|_{L^p_t(W^{1,\infty})}+\|\nabla u\|_{L^p_t(L^\infty)}\|\nabla\bar d\|_{L^\infty_t(L^r)}\nonumber\\
&+\|\nabla\bar d\|_{L^p_t(W^{1,\infty})}\|\nabla\bar d\|_{L^\infty_t(L^r)}(1+\|\bar d\|_{L^\infty_t(W^{1,\infty})}))\nonumber\\
\le& C({D}_0+U(t)D(t)+D^2(t)+D^3(t)),
\end{align}
where we have used Lemma \ref{lemma3.2} and $B^{3-\frac 2p}_{r, p}\hookrightarrow W^{1,\infty}$. Applying Theorem \ref{thm2.4} to the velocity equation in system \eqref{1.1} and using \eqref{2.4}, one reaches
\begin{align}
{U}(t)\le& C(1+\Theta(t))^{\frac{k q}{q-N}}({U}_0+\|u\cdot\nabla u\|_{L^p_t(L^r)}+\|\Delta\bar d\cdot\nabla\bar d\|_{L^p_t(L^r)})\nonumber\\
&\quad+(1+\Theta(t))^{\frac{q l_1}{q-N}}(\|\nabla\theta\|_{L^\infty_t(L^q)}+\|\theta\|_{\dot{C}^\beta_t(L^\infty)})^{l_2}\|u\|_{L^p_t(L^r)}.
\end{align}
Noticing that $W^{1, s}_t(L^q)\cap L^s_t(W^{2, q})\hookrightarrow \dot{C}^\beta_t(L^\infty)$, it follows that 
\begin{equation}\label{4.4}
U(t)\le C(1+\Theta(t))^{\max\{k, l_1\}\frac q{q-N}}(U_0+U^2(t)+D^2(t)+\Theta^{l_2}(t)U(t)).
\end{equation}
Summing up \eqref{4.1}, \eqref{4.2} and \eqref{4.4}, it yields that 
\begin{equation}\label{4.5}
F(t)\le C(1+F(t))^{\max\{k, l_1\}\frac q{q-N}}(F_0+F^2(t)+F^3(t)+F^6(t)+F^{1+l_2}(t)).
\end{equation}
Assume that for some fixed $T>0$ such that
\begin{equation}\label{4.6}
F(t)\le 5CF_0,\quad\forall~t\in[0,T].
\end{equation}
If the initial data is sufficiently small such that 
\begin{align}\label{small}
\begin{cases}
& (1+5CF_0)^{\max\{k, l_1\}\frac{q}{q-N}}\le 2,\\
& 5^2C^2F_0+5^3C^3F_0^2+M^6C^6F_0^5+5^{1+l_2}C^{1+l_2}F_0^{l_2}\le 1,
\end{cases}
\end{align}
then one can be convinced that, by \eqref{4.5}, 
\begin{equation}
F(t)\le 4CF_0,\quad \forall~t\in[0, T].
\end{equation}

Thus by a continuation argument, one can extend a local solution to a global one. $\delta$ in the smallness condition of initial data is determined by \eqref{small}.
\end{proof}
%%%%%%%%%%%%%%%%%%%
\section{Appendix: Proof of Theorem \ref{thm2.4}}
 First of all, recall the maximal regularity for the linear Stokes operator (cf. Theorem 3.2 \cite{Danchin2006}):
\begin{theorem}\label{thm2.3}
Let $\Omega\subset\mathbb{R}^N$ be a bounded domain with smooth boundary  and $1<p, r<\infty$. $u_0\in D^{1-\frac 1p, p}_{A_r}$, $f\in L^p(0,T; L^r)$ and $\mu$ is a positive constant. Then the system 
\begin{align*}
\begin{cases}
\partial_t u-\mu\Delta u+\nabla P=f,\\
\divv u=0,\quad \int_\Omega P\,dx=0,\\
u|_{t=0}=u_0,\quad u|_{\partial\Omega}=0,
\end{cases}
\end{align*}
has a unique global solution $(u, P)$ satisfying 
\begin{align*}
& \mu^{1-\frac 1p} \|u\|_{L^\infty(0,T;D^{1-\frac 1p,p}_{A_r})}+\|(\partial_t u,\mu\Delta u, u,\nabla P)\|_{L^p(0,T;L^r)}\\
 &\quad\le C\Big(\mu^{1-\frac 1p}\|u_0\|_{D^{1-\frac 1p,p}_{A_r}}+\|f\|_{L^p(0,T;L^r)}\Big),
\end{align*}
for all $T\ge0$, with $C=C(p,r,\sigma(\Omega))$, where $\sigma(\Omega)$ stands for the open set
\begin{equation*}
\sigma(\Omega)=\Big\{\dfrac x{\delta(\Omega)}\Big|x\in\Omega\Big\},
\end{equation*}
with $\delta(\Omega)$ denoting the diameter of $\Omega$.
\end{theorem}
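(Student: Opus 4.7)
The plan is to reduce the general case to the model case $\mu=1$ via a parabolic time rescaling, then establish the $\mu=1$ estimate through the maximal $L^p$-regularity of the Stokes operator on a bounded domain. Scale invariance of the constant (dependence only on $\sigma(\Omega)$) is obtained by an additional spatial rescaling $y=x/\delta(\Omega)$, which together with the matching time renormalization sends $\Omega$ to $\sigma(\Omega)$ while preserving the form of the Stokes system.

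For the $\mu=1$ case I would work in the abstract semigroup framework. Let $A_r=-P_\sigma\Delta$ denote the Stokes operator on $L^r_\sigma(\Omega)$ with domain $D(A_r)$, where $P_\sigma$ is the Helmholtz-Leray projection. Applying $P_\sigma$ to the momentum equation eliminates the pressure and reduces the system to the abstract Cauchy problem $\partial_t u+A_r u=P_\sigma f$, $u(0)=u_0$. The classical results of Giga (and Giga-Sohr) show that $A_r$ admits a bounded $H^\infty$-calculus on $L^r_\sigma(\Omega)$; combined with the Dore-Venni theorem (or Weis' characterization via $\mathcal R$-boundedness), this yields the maximal $L^p$-regularity
$$\|\partial_t u\|_{L^p(0,T;L^r)}+\|A_r u\|_{L^p(0,T;L^r)}\le C\big(\|u_0\|_{D^{1-\frac 1p,p}_{A_r}}+\|f\|_{L^p(0,T;L^r)}\big),$$
together with the trace embedding $W^{1,p}(0,T;L^r_\sigma)\cap L^p(0,T;D(A_r))\hookrightarrow C([0,T];D^{1-\frac 1p,p}_{A_r})$ that supplies the $L^\infty$-in-time control. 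Elliptic regularity for the stationary Stokes problem converts $\|A_ru\|_{L^r}$ into $\|u\|_{W^{2,r}}$, hence into a $\|\Delta u\|_{L^r}$-bound, and Poincaré's inequality on the bounded domain absorbs $\|u\|_{L^pL^r}$. The pressure is recovered by applying $I-P_\sigma$ to the momentum equation: since $\partial_t u$ is divergence-free, $\nabla P=(I-P_\sigma)(f+\Delta u)$, normalized by $\int_\Omega P\,dx=0$, and the $\|\nabla P\|_{L^pL^r}$-estimate follows from the $L^r$-boundedness of $I-P_\sigma$, with no $T$-dependence.

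For general $\mu>0$, the change of variables $v(t,x):=u(t/\mu,x)$, $\tilde P(t,x):=\mu^{-1}P(t/\mu,x)$, $\tilde f(t,x):=\mu^{-1}f(t/\mu,x)$ transforms the system into the $\mu=1$ Stokes problem on the dilated interval $(0,\mu T)$ with the same initial datum $u_0$. Applying the Step-1 estimate to $(v,\tilde P)$ and translating each norm back through $s=t/\mu$ produces precisely the $\mu$-weights appearing in the statement. To reduce the domain dependence of $C$ to $\sigma(\Omega)$, I would then apply a further spatial rescaling $y=x/\delta(\Omega)$ with $w(t,y)=u(\delta(\Omega)^2 t,\delta(\Omega)y)$, which leaves the $\mu=1$ Stokes system form-invariant and maps the problem to the unit-diameter domain $\sigma(\Omega)$. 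All powers of $\delta(\Omega)$ then cancel between the LHS and RHS because the estimate is homogeneous under this parabolic rescaling, leaving the constant dependent only on $p$, $r$ and $\sigma(\Omega)$. Uniqueness follows from linearity and the a priori estimate applied to the difference of two solutions; existence is furnished by the semigroup formulation.

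The main obstacle is Step 1, and specifically the nontrivial input that $A_r$ possesses a bounded $H^\infty$-calculus on $L^r_\sigma(\Omega)$ for a smoothly bounded domain with no-slip boundary. Granting this, the remaining pieces---trace space identification, pressure recovery via the Helmholtz decomposition, and the transfer from $\|A_ru\|_{L^r}$ to $\|u\|_{W^{2,r}}$---are standard. A secondary technical nuisance is the careful bookkeeping of the spatial rescaling in the domain normalization, so that the $\delta(\Omega)$-powers cancel exactly and the final constant depends only on the shape $\sigma(\Omega)$ and not on the size of $\Omega$.
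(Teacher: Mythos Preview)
The paper does not prove this theorem; it is quoted verbatim as Theorem~3.2 of Danchin~\cite{Danchin2006} and used as a black box in the appendix. So there is no ``paper's own proof'' to compare against beyond the citation.

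Your outline is a correct and standard route to the result, and it matches in spirit the argument behind the cited theorem: maximal $L^p$--regularity of the Stokes operator via $H^\infty$-calculus (or BIP), recovery of the pressure from $(I-P_\sigma)$, time-rescaling to handle general $\mu$, and spatial rescaling by $\delta(\Omega)^{-1}$ to isolate the shape dependence. One caveat: the inequality as stated is \emph{not} homogeneous under the parabolic spatial rescaling, because the lower-order piece $\|u\|_{L^p_T(L^r)}$ on the left scales differently from $\|\partial_t u\|_{L^p_T(L^r)}$ and $\|\mu\Delta u\|_{L^p_T(L^r)}$. The way this is handled in Danchin's argument is not that ``all powers of $\delta(\Omega)$ cancel'' automatically, but rather that the maximal-regularity estimate is first obtained on the unit-diameter domain $\sigma(\Omega)$ with constant $C(p,r,\sigma(\Omega))$, and then the lower-order term is recovered on $\Omega$ via an interpolation/Poincar\'e step whose constant is tracked explicitly (this is also why the $\mu$-weights appear as they do). Your last paragraph already flags this bookkeeping as the delicate point; just be aware that the blanket claim of homogeneity is not literally true and needs the more careful treatment you allude to.
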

\begin{remark}
Notice that in the above estimates $C$ depends on the shape of the domain $\Omega$, but is independent of the diameter $\delta(\Omega)$.
\end{remark}

The basic idea is that if $\theta$ is close to a constant $\underline\theta$, Theorem \ref{thm2.3} provides us with the desired estimates. Indeed, one can rewrite the system as 
\begin{align}\label{n2.5}
\begin{cases}
& \partial_t u-\divv\big(\mu(\underline\theta)\mathcal{D}(u)\big) +\nabla P= f+\big(\mu(\theta)-\mu(\underline\theta)\big)\Delta u+\mu'(\theta)\nabla\theta\cdot\mathcal{D}(u),\\
& \divv u=0, \quad\int_\Omega P\, dx=0,\\
& u|_{t=0}=u_0,\quad u|_{\partial\Omega}=0.
\end{cases}
\end{align}
Now if $\|\mu(\theta)-\mu(\underline\theta)\|_{L^\infty}$ is small, the term $\|\big(\mu(\theta)-\mu(\underline\theta)\big)\Delta u\|_{L^p_t(L^r)}$  may be absorbed by the left-hand side (LHS) of the inequality given in Theorem \ref{thm2.3}.  Although generally one can not expect $\theta$ is close enough to a constant, but if $\theta$ is Lipschitz continuous, it will not deviate from a constant too much in small enough domain. Thus one can perform localization argument to recover Stokes estimates.

The proof of Theorem \ref{thm2.4} is organized as follows. First, we restrict ourselves to the case of null initial data, {\it i.e.}, $u_0\equiv 0$, and prove the {\it a priori} estimates for $(u, P)$ under the assumption that $\theta$ is independent of time. Next, we prove the similar estimates for time-dependent temperature. Finally, we derive the desired estimates  for the general initial data $u_0\in D^{1-\frac 1p, p}_{A_r}$.

\subsubsection{Existence of solution for null initial data} We divide the proof into the following three steps.

\noindent
{\textit{(a) A priori estimates with time-independent temperature}}
\begin{theorem}\label{thm2.5}
Suppose $p$, $q$, $r$, $\mu$ and $\Omega$ satisfy the assumptions in Theorem \ref{thm2.4}, $u_0=0$ and $\theta=\theta(x)\in W^{1,q}$. If $(u, P)$ is a smooth solution to system \eqref{2.2} on $\Omega\times[0,T)$, then for any $t<T$ it holds that
\begin{align}\label{n2.6}
\begin{split}
&\big\|(\partial_t u, \Delta u, \nabla P)\big\|_{L^p_t(L^r)}\\
&\quad\le C\Big(B_\theta^{1+\tilde\zeta}\|f\|_{L^p_t(L^r)}+\big(B_\theta-1\big)^{2r'(\tilde\zeta\cdot\frac{q}{q-N}+1)}\|u\|_{L^p_t(L^r)}\Big),
\end{split}
\end{align}
where $B_\theta=1+\|\nabla \theta\|_{L^q(\Omega)}^{\frac q{q-N}}$, $\tilde\zeta=\max\{0, \frac Np-\frac Nr\}$ and $C$ is independent of $f$, $\theta$ and $t$.
\end{theorem}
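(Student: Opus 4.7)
The plan is to follow Danchin's localization strategy from \cite{Danchin2006}, reducing the variable-viscosity system to the constant-coefficient Stokes estimate of Theorem \ref{thm2.3}. The starting observation is that since $\theta\in W^{1,q}$ with $q>N$, Morrey embedding gives $\mu\circ\theta$ a Hölder modulus
$$|\mu(\theta(x))-\mu(\theta(y))| \le C\bar\mu' \|\nabla\theta\|_{L^q} |x-y|^{1-N/q},$$
so I can choose a localization radius $\rho$ for which $\mathrm{osc}_{B_\rho}\mu(\theta) \le \underline\mu/2$. This forces $\rho \simeq \|\nabla\theta\|_{L^q}^{-q/(q-N)} \simeq (B_\theta-1)^{-1}$, which is precisely why $B_\theta$ is the right bookkeeping quantity. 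I then cover $\Omega$ by $N_\rho \lesssim \rho^{-N}$ balls $B_\rho(x_i)$ of bounded overlap, with a subordinate partition of unity $\{\phi_i\}$ satisfying $|\nabla^k\phi_i|\lesssim \rho^{-k}$ and $\sum_i \phi_i^2 \equiv 1$.

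Next, for each $i$, set $\mu_i := \mu(\theta(x_i))$ and localize \eqref{2.2} by multiplying $u$ and $P$ by $\phi_i$. Since $\phi_i u$ is not divergence-free, I would introduce a Bogovskii-type corrector $w_i$ solving $\divv w_i = \nabla\phi_i\cdot u$ on $B_\rho(x_i)\cap\Omega$ with zero boundary data and dilation-invariant estimates $\|\nabla^k w_i\|_{L^r}\lesssim \rho^{-k}\|\nabla\phi_i\cdot u\|_{L^r}$ (smoothness of $\partial\Omega$ enters here). Then $(\tilde u_i,\tilde P_i):=(\phi_i u - w_i,\phi_i P)$ satisfies a constant-viscosity Stokes system with viscosity $\mu_i$ and right-hand side
\begin{equation*}
F_i = \phi_i f + (\mu(\theta)-\mu_i)\phi_i\Delta u + \mu'(\theta)\phi_i\nabla\theta\cdot\mathcal{D}(u) + R_i,
\end{equation*}
where $R_i$ collects commutator and corrector terms scaling like $\rho^{-k}$ ($k=1,2$) times lower-order norms of $u$ and $P$. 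Applying Theorem \ref{thm2.3} (whose constant is shape-invariant and thus $\rho$-independent) on each piece and summing using bounded overlap, the second term of $F_i$, whose $L^\infty$ coefficient is $\le\underline\mu/2$ by construction, is absorbed into $\|\Delta u\|_{L^p_t(L^r)}$ on the left.

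After absorption, the estimate should take the schematic form
\begin{equation*}
\|(\partial_t u,\Delta u,\nabla P)\|_{L^p_t(L^r)} \le CB_\theta \|f\|_{L^p_t(L^r)} + C(B_\theta-1)^{a}\|\nabla u\|_{L^p_t(L^r)} + C(B_\theta-1)^{b}\|u\|_{L^p_t(L^r)},
\end{equation*}
where the $\nabla u$ term comes from $\mu'(\theta)\phi_i\nabla\theta\cdot\mathcal{D}(u)$ (Hölder-split in $L^q$–$L^{r^*}$) together with the commutator pieces, and $a,b$ count accumulated powers of $\rho^{-1}\simeq B_\theta-1$. A final Gagliardo–Nirenberg interpolation reabsorbs $\|\nabla u\|_{L^p_t(L^r)}$ between $\|\Delta u\|_{L^p_t(L^r)}$ and $\|u\|_{L^p_t(L^r)}$ via Young's inequality with conjugate exponent $r'$ (producing the factor $2r'$), and in the regime $\tilde\zeta>0$ an extra Sobolev embedding costs a further $\tilde\zeta\cdot q/(q-N)$, yielding the claimed $1+\tilde\zeta$ on $\|f\|$ and $2r'(\tilde\zeta q/(q-N)+1)$ on $\|u\|$.

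The main technical obstacle, and the real content of the proof, is the precise bookkeeping of powers of $B_\theta-1$: each $\rho^{-1}$ from a commutator, each Sobolev embedding in the regime $\tilde\zeta>0$, and each Young absorption must compound into the exact exponents in \eqref{n2.6}. A secondary but essential point is verifying that both the Stokes constant of Theorem \ref{thm2.3} and the Bogovskii constant on each $B_\rho(x_i)\cap\Omega$ are invariant under rescaling, so that shrinking $\rho\to 0$ does not degrade the constants.
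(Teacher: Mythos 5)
Your overall architecture---localize at scale $\rho\simeq(B_\theta-1)^{-1}$ using the H\"older modulus from $W^{1,q}\hookrightarrow \dot C^{1-N/q}$, apply constant-coefficient Stokes on each patch, absorb the viscosity fluctuation, and interpolate away $\nabla u$---does match the paper's proof, but there are two genuine gaps. The Bogovskii-corrector route for the nonzero divergence $\divv(\phi_i u)=\nabla\phi_i\cdot u$ fails as stated: setting $w_i=\mathcal B_i(\nabla\phi_i\cdot u)$, the time derivative $\partial_t w_i=\mathcal B_i(\nabla\phi_i\cdot\partial_t u)$ obeys on a domain of size $\rho$ only the dilation-invariant bound $\|\partial_t w_i\|_{L^r}\lesssim \rho\cdot\rho^{-1}\|\partial_t u\|_{L^r(\Omega_i)}$, i.e.\ an $O(1)$ constant with no small factor, so after summing the cover you would have $\|\partial_t u\|_{L^r(\Omega)}$ on the right with a non-small constant that cannot be reabsorbed by the left-hand side; substituting the PDE for $\partial_t u$ inside $\mathcal B_i$ merely reintroduces $\|\Delta u\|$ and $\|\nabla P\|$ with the same problem. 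The paper instead invokes Danchin's Theorem~\ref{thm2.6}, whose hypothesis is the representation $\partial_t\tau=\tau_0+\divv R$ with $\tau_0,R\in L^p_T(L^r)$; using the equation one writes $\tau_0$, $R$ in terms of $f$, $\nabla u$, $P$ only (never $\partial_t u$ or $\nabla P$), and the theorem then returns the small factors $\delta(\Omega')\|\tau_0\|$ and $\delta^{1/r}(\Omega')\|R\cdot n\|_{L^r(\partial\Omega)}$ that make the absorption work. Your outline does not isolate this structure, so as written the absorption would not close.

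Second, the commutator $P\nabla\phi_i$ and its boundary trace put $\|P\|_{L^r(\Omega)}$ and $\|P\|_{L^r(\partial\Omega)}$ on the right-hand side; these are not $\|\nabla P\|$ and need an independent estimate in terms of $f$, $u$, and $\nabla^2 u$ before anything can be absorbed. The paper supplies this by a duality argument (test $P$ against mean-zero $h$, solve $\Delta v=h$ with $\partial_\nu v|_{\partial\Omega}=0$, integrate by parts using the momentum equation, and invoke $W^{2,r'}$ elliptic regularity), yielding inequality \eqref{2.33}; your proposal omits this step entirely. Finally, a bookkeeping correction: the exponent $\tilde\zeta=\max\{0,N/p-N/r\}$ is not produced by ``an extra Sobolev embedding when $\tilde\zeta>0$'' but by summing the local norms over the $K\sim\lambda^{-N}$ patches via $\sum_k\|z\|_{L^p_T(L^r(\Omega_k))}\le m^{1/r}K^\zeta\|z\|_{L^p_T(L^r(\Omega))}$ with $\zeta=\max\{0,1/p-1/r\}$, so that $K^\zeta\sim\lambda^{-\tilde\zeta}$; and the $r'$ in the exponent on $\|u\|$ arises from the trace inequalities \eqref{2.22}--\eqref{2.23} rather than from a generic Young conjugation.
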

\begin{proof}
Rewriting \eqref{2.2} as \eqref{n2.5} and applying Theorem \ref{thm2.3}, we obtain
\begin{align}\label{n2.7}
\begin{split}
&\|u\|_{L^\infty_T\big(D^{1-\frac 1p, p}_{A_r}\big)}+\|(\partial_t u,\Delta u,\nabla P)\|_{L^p_T(L^r)}  \\
&\quad\le C\big(\|f\|_{L^p_T(L^r)}+\|\mu(\theta)-\mu(\underline\theta)\|_{L^\infty}\|\Delta u\|_{L^p_T(L^r)}+\|\nabla u\nabla\theta\|_{L^p_T(L^r)}\big),
\end{split}
\end{align}
where $\underline\theta=\inf_{x\in\Omega}\theta(x)$ and $C$ depends on $p$, $q$, $r$, $\Omega$, $\bar\mu$,  $\underline\mu$ and $\bar\mu'$. From now on, we will keep this dependence of $C$ in silence unless otherwise claimed, the value of $C$ may change from line to line.

By Gagliardo-Nirenberg interpolation inequality, Poincar\'e-Wirtinger inequality and Young's inequality,  we arrive at
\begin{equation}\label{2.8}
\|\nabla u\nabla\theta\|_{L^p_T(L^r(\Omega))}\le \epsilon \|\Delta u\|_{L^p_T(L^r(\Omega))}+\epsilon^{\frac {N+q}{N-q}}\|\nabla\theta\|_{L^q(\Omega)}^{\frac {2q}{q-N}}\|u\|_{L^p_T(L^r(\Omega))},
\end{equation}
for any $\epsilon>0$.

On the other hand, since $q>N$, $\theta\in W^{1,q}(\Omega)\hookrightarrow \dot{C}^{\alpha}(\Omega)$ for $\alpha=1-\frac N q\in(0,1)$, we have 
\begin{equation}\label{n2.9}
\|\mu(\theta)-\mu(\underline\theta)\|_{L^\infty}\le \bar{\mu'}\|\theta-\underline\theta\|_{L^\infty(\Omega)}
\le \bar{\mu'}\delta^\alpha(\Omega)\|\theta\|_{\dot{C}^\alpha(\Omega)}\le \bar{\mu'}\delta^\alpha(\Omega)\|\nabla\theta\|_{L^{q}(\Omega)}.
\end{equation}
If $C\delta^\alpha(\Omega)\|\nabla\theta\|_{L^{q}(\Omega)}\le\frac 14$, then the corresponding term is absorbed by the LHS of \eqref{n2.7}. Choosing $\epsilon=\frac 1{4C}$ and substituting \eqref{2.8} and \eqref{n2.9} into \eqref{n2.7}, we obtain
\begin{align}\label{n2.10}
\begin{split}
&\|u\|_{L^\infty_T\big(D^{\frac 1{p'},p}_{A_r}\big)}+\|(\partial_t u,\Delta u, \nabla P)\|_{L^p_T(L^r(\Omega))}\\
&\quad\le C\big(\|f\|_{L^p_T(L^r(\Omega))}+\|\nabla\theta\|_{L^q(\Omega)}^{\frac {2q}{q-N}}\|u\|_{L^p_T(L^r(\Omega))}\big).
\end{split}
\end{align}
Otherwise if $C\delta^\alpha(\Omega)\|\nabla\theta\|_{L^{q}(\Omega)}>\frac 14$,  we perform the space localization to adjust $\delta(\Omega)$. 

We consider the following subordinate partition of $\Omega$:

 $\{\Omega_k\}_{k=1}^K$ is an open covering of $\Omega$ with multiplicity $m$,\footnote{The multiplicity of an covering means at most how many subsets intersect with each other, this quantity only depends of the space dimension $N$.} $\Omega_k$ is star-shaped and for $1\le k\le K$, it holds that 
 $$\delta(\Omega_k)\le \lambda\in(0,\delta(\Omega)),$$
 the value of $\lambda $ will be determined later. 
$\{\phi_k\}_{k=1}^K$ is a family of characteristic function such that $$0\le\phi_k\le 1, ~~\phi_k\in C^2_c(\Omega_k),\quad\sum_{k=1}^K\phi_k(x)=1, ~~\forall~ x\in\bar\Omega, $$ $$\|\nabla ^\alpha \phi_k\|_{L^\infty(\Omega_k)}\le C\lambda^{-|\alpha|},~~ |\alpha|\le 2.$$
The number $K$ of the covering is of order $ \big(\delta(\Omega)\lambda^{-1}\big)^N$,\\
and the number $K'$ of domains $\Omega_k$ intersecting with $\partial\Omega$ is of order $ \big(\delta(\Omega)\lambda^{-1}\big)^{N-1}$.

Now define $u_k=u\phi_k$, $P_k=P\phi_k$ and $f_k=f\phi_k$. Then $(u_k, P_k, f_k)$ satisfies the following system
\begin{align}\label{2.9}
\begin{cases}
& \partial_t u_k-\mu(\underline\theta_k)\Delta u_k+\nabla P_k=f_k-2\mu(\theta)\nabla u\cdot\nabla \phi_k+\mu'(\theta)\mathcal{D}(u)\cdot\nabla\theta\phi_k\\
&\qquad\qquad-\mu(\theta)u\Delta\phi_k+P\nabla\phi_k+(\mu(\theta)-\mu(\underline\theta_k))\Delta u_k,\\
&\divv u_k=u\cdot\nabla \phi_k,\quad \int_\Omega P_k\,dx=0,\\
& u_k|_{t=0}=0,\quad u_k|_{\partial\Omega}=0,
\end{cases}
\end{align}
where $\underline\theta_k=\inf_{x\in\Omega_k}\theta(x)$.
Notice that $u_k$ is not divergence-free and the localization procedure produces some additional lower order terms. To obtain the estimates of the above system, we use a theorem proved by Danchin:
\begin{theorem}[Theorem 3.6 in \cite{Danchin2006}]\label{thm2.6}
Let $\Omega$ be a $C^{2+\epsilon}$ bounded domain of $\mathbb{R}^N$ and $1<p, r<\infty$. Let $\Omega'\subset\bar\Omega$ be open and star-shaped with respect to small ball of diameter $d>0$. Let $\tau\in L^p(0,T;W^{1,r})$ satisfy $\tau(0,\cdot)\equiv 0$,
\begin{equation*}\label{2.10}
\int_\Omega\tau\,dx=0,\quad \partial_t\tau=\tau_0+\divv R,\quad\text{and}\quad\forall~ t\in(0,T),\quad\text{supp} ~\tau_0(t,\cdot)~\cap ~supp~ R(t,\cdot)\subset\bar{\Omega}',
\end{equation*}
with $R$ and $\tau_0$ in $L^p(0,T;L^r(\Omega))$ and $R\cdot n$ in $L^p(0,T;L^r(\partial\Omega))$. Let $v_0\in D^{1-\frac 1p,p}_{A_r}$, $f\in L^p(0,T;L^r(\Omega))$ and $\mu$ is a constant. Then the following system 
\begin{align*}
\begin{cases}
\partial_t v-\mu\Delta v+\nabla P=f,\\
\divv v=\tau,\quad \int_\Omega P\,dx=0,\\
v|_{t=0}=v_0,\quad v|_{\partial\Omega}=0,
\end{cases}
\end{align*}
has a unique solution $(v, P)$ on $\Omega\times[0,T)$ such that 
\begin{equation*}
v\in L^p(0,T;W^{2,r})\cap W^{1,p}(0,T;L^r)\quad\text{and}\quad P\in L^p(0,T;W^{1,r}).
\end{equation*}
Besides, the following estimate holds true with $C=C(r,p,N,\sigma(\Omega))$:
\begin{align*}
 \|(\partial_t v,\mu\nabla^2 v,\nabla P)\|_{L^p_T(L^r(\Omega))}\le&  C\big(\mu^{1-\frac 1p}\|v_0\|_{D^{1-\frac 1p,p}_{A_r}}+\|f\|_{L^p_T(L^r(\Omega))}+\|R\|_{L^p_T(L^r(\Omega))}\\&
+\mu\|\nabla \tau\|_{L^p_T(L^r(\Omega))}+\delta(\Omega')\|\tau_0\|_{L^p_T(L^r(\Omega))}\\
&+\delta^{\frac 1r}(\Omega')\|R\cdot n\|_{L^p_T(L^r(\partial\Omega))}\big).
\end{align*}
\end{theorem}
Let $\tau=u\cdot\nabla\phi_k$, then $\tau(0,\cdot)=u(0,\cdot)\nabla \phi_k\equiv 0$, $\int_\Omega \tau\,dx=\int_\Omega \divv u_k\,dx=0$. Moreover,
\begin{align*}
\partial_t\tau=&\partial_t u\cdot\nabla\phi_k\\
=&\mu(\theta)\Delta u\cdot\nabla\phi_k+\mu'(\theta)\mathcal{D}(u)\nabla\theta\nabla\phi_k-\nabla P\cdot\nabla\phi_k+f\nabla\phi_k\\
=& \underbrace{f\nabla\phi_k+P\Delta\phi_k-\mu(\theta)\mathcal{D}(u)\Delta\phi_k}_{\tau_0}+\divv(\underbrace{\mu(\theta)\mathcal{D}(u)\cdot\nabla\phi_k-P\nabla\phi_k}_{R}),
\end{align*}
and $supp~\tau_0(t,\cdot)\cap supp~R(t,\cdot)\subset\bar{\Omega}_k$. Hence by Theorem \ref{thm2.6}, there exists a unique solution $(u_k, P_k)$ to \eqref{2.9} satisfying
\begin{align}\label{2.13}
\begin{split}
&\|(\partial_t u_k,\Delta u_k,\nabla P_k)\|_{L^p(0,T;L^r(\Omega))}\\
&\lesssim  \|g_k\|_{L^p_T(L^r(\Omega))}+\|R\|_{L^p_T(L^r(\Omega))}+\lambda\|\tau_0\|_{L^p_T(L^r(\Omega))}+\lambda^{\frac 1r}\|R\cdot n\|_{L^p_T(L^r(\partial\Omega))}\\
&\quad+\|\nabla \tau\|_{L^p_T(L^r(\Omega))}+\|\mu(\theta)-\mu(\underline\theta_k)\|_{L^\infty(\Omega_k)}\|\Delta u_k\|_{L^p_T(L^r(\Omega))},
\end{split}
\end{align}
where $g_k=f_k+\mu'(\theta)\mathcal{D}(u)\nabla\theta\phi_k-2\mu(\theta)\nabla u\cdot\nabla \phi_k-\mu(\theta)u\Delta\phi_k+P\nabla\phi_k$.

First of all,
\begin{equation}\label{2.14}
\|\mu(\theta)-\mu(\underline\theta_k)\|_{L^\infty(\Omega_k)}\|\Delta u_k\|_{L^p_T(L^r(\Omega))}\le\bar{\mu'}\lambda^{\alpha}\|\nabla\theta\|_{L^{q}(\Omega)}\|\Delta u_k\|_{L^p_T(L^r(\Omega))}.
\end{equation}
Choose $\lambda\le \kappa\|\theta\|_{W^{1,q}(\Omega)}^{-\frac 1\alpha}$ with $\kappa<<1$ such that the corresponding term can be absorbed by the LHS of \eqref{2.13}. Next, we move on to evaluate the terms on the RHS of \eqref{2.13} one by one.
\begin{align}\label{2.15}
\begin{split}
\|g_k\|_{L^p_T(L^r(\Omega))}\lesssim &\|f_k\|_{L^p_T(L^r(\Omega))}+\|\nabla u\cdot\nabla\phi_k\|_{L^p_T(L^r(\Omega))}\\
&+\|\nabla u\nabla\theta\phi_k\|_{L^p_T(L^r(\Omega))}+\|u\Delta\phi_k\|_{L^p_T(L^r(\Omega))}\\
&+\|P\nabla\phi_k\|_{L^p_T(L^r(\Omega))}\\
\lesssim & \|f\|_{L^p_T(L^r(\Omega_k))}+\lambda^{-1}\|\nabla u\|_{L^p_T(L^r(\Omega_k))}\\&+\|\nabla u\nabla\theta\|_{L^p_T(L^r(\Omega_k))}+\lambda^{-2}\|u\|_{L^p_T(L^r(\Omega_k))}\\&+\lambda^{-1}\|P\|_{L^p_T(L^r(\Omega_k))}.
\end{split}
\end{align}
Similarly, one also has
\begin{align}
&\|R\|_{L^p_T(L^r(\Omega))}\lesssim \lambda^{-1}\|\nabla u\|_{L^p_T(L^r(\Omega_k))}+\lambda^{-1}\|P\|_{L^p_T(L^r(\Omega_k))},\label{2.16}\\
&\lambda\|\tau_0\|_{L^p_T(L^r(\Omega))}\lesssim \|f\|_{L^p_T(L^r(\Omega_k))}+\lambda^{-1}\|P\|_{L^p_T(L^r(\Omega_k))}+\lambda^{-1}\|\nabla u\|_{L^p_T(L^r(\Omega_k))},\label{2.17}\\
&\lambda^{\frac 1r}\|R\cdot n\|_{L^p_T(L^r(\partial\Omega))}\lesssim \lambda^{-1+\frac 1r}\big(\|\nabla u\|_{L^p_T(L^r(\partial\Omega\cap\Omega_k))}+\|P\|_{L^p_T(L^r(\partial\Omega\cap\Omega_k))}\big).\label{2.18}
\end{align}
Substituting \eqref{2.14}-\eqref{2.18} into \eqref{2.13}, one reaches
\begin{align}\label{2.19}
\begin{split}
&\|(\partial_t u_k, \Delta u_k, \nabla P_k)\|_{L^p_T(L^r(\Omega))}\\
&\quad\lesssim  \|f\|_{L^p_T(L^r(\Omega_k))}+\lambda^{-1}\|\nabla u\|_{L^p_T(L^r(\Omega_k))}+\lambda^{-2}\|u\|_{L^p_T(L^r(\Omega_k))}\\
&\quad\quad+\lambda^{-1}\|P\|_{L^p_T(L^r(\Omega_k))}+\|\nabla u\nabla\theta\|_{L^p_T(L^r(\Omega_k))}\\
&\quad\quad+\lambda^{-\frac 1{r'}}\big(\|\nabla u\|_{L^p_T(L^r(\partial\Omega\cap\Omega_k))}+\|P\|_{L^p_T(L^r(\partial\Omega\cap\Omega_k))}\big).
\end{split}
\end{align}

Next, we sum up the local estimates to obtain the whole estimates. Noticing that
\begin{equation*}
\forall~ z\in L^p_T(L^r(\Omega)),\quad \sum_{k=1}^K\|z\|_{L^p_T(L^r(\Omega_k))}\le m^{\frac 1r}K^\zeta\|z\|_{L^p_T(L^r(\Omega))},
\end{equation*}
where $\zeta=\max\{0,\frac 1p-\frac 1r\}$, summing up \eqref{2.19} over $k$, we have 
\begin{align}\label{2.20}
\begin{split}
&\|(\partial_t u,\Delta u,\nabla P)\|_{L^p_T(L^r(\Omega))}\\
&\quad\lesssim \lambda^{-N\zeta}\big(\|f\|_{L^p_T(L^r(\Omega))}+\lambda^{-1}\|\nabla u\|_{L^p_T(L^r(\Omega))}\\
&\qquad+\lambda^{-2}\|u\|_{L^p_T(L^r(\Omega))}+\lambda^{-1}\|P\|_{L^p_T(L^r(\Omega))}+\|\nabla u\nabla \theta\|_{L^p_T(L^r(\Omega))}\big)\\
&\quad\quad+\lambda^{-(N-1)\zeta}\lambda^{-\frac 1{r'}}\big(\|\nabla u\|_{L^p_T(L^r(\partial\Omega))}+\|P\|_{L^p_T(L^r(\partial\Omega))}\big).
\end{split}
\end{align}
Standard interpolation inequalities enable us to further simplify the RHS of \eqref{2.20}. \\
By Gagliardo-Nirenberg and Young's inequality, it follows that
\begin{equation}\label{2.21}
\|\nabla u\|_{L^r(\Omega)}\le C\big(\eta_1^{-1}\|u\|_{L^r(\Omega)}+\eta_1\|\nabla^2 u\|_{L^r(\Omega)}\big),\quad\forall ~\eta_1>0.
\end{equation}
And according to the trace theorem (page 63 in \cite{Galdi}), one deduces that
\begin{align}
\|P\|_{L^r(\partial\Omega)}\le C\big(\eta_2^{-\frac 1r}\|P\|_{L^r(\Omega)}+\eta_2^{\frac 1{r'}}\|\nabla P\|_{L^r(\Omega)}\big),\quad \forall~\eta_2>0,\label{2.22}\\
\|\nabla u\|_{L^r(\partial\Omega)}\le C\big(\eta_3^{-\frac 1r-1}\|u\|_{L^r(\Omega)}+\eta_3^{\frac 1{r'}}\|\nabla^2 u\|_{L^r(\Omega)}\big),\quad\forall~\eta_3>0\label{2.23}.
\end{align}
Again by \eqref{2.8}, 
\begin{equation}\label{2.24}
\|\nabla u\nabla \theta\|_{L^r(\Omega)}\le C\big(\eta_4\|\nabla^2u\|_{L^r(\Omega)}+\eta_4^{\frac {N+q}{N-q}}\|\nabla\theta\|_{L^q(\Omega)}^{\frac{2q}{q-N}}\|u\|_{L^r(\Omega)}\big),\quad\forall~\eta_4>0.
\end{equation}

Now choose $\eta_1=\epsilon\lambda^{N\zeta+1}$, $\eta_2=\eta_3=\epsilon\lambda^{(N-1)\zeta r'+1}$ and $\eta_4=\epsilon\lambda^{N\zeta}$, with $\epsilon<<1$, then the terms $\|\nabla P\|_{L^p_T(L^r(\Omega))}$ and $\|\nabla^2 u\|_{L^p_T(L^r(\Omega))}$ can be absorbed by the LHS of \eqref{2.20}. Consequently, substituting \eqref{2.21}-\eqref{2.24} into \eqref{2.20}, we reach
\begin{align}\label{2.25}
\begin{split}
&\|(\partial_t u, \Delta u, \nabla P)\|_{L^p_T(L^r(\Omega))}\\
&\lesssim \lambda^{-N\zeta}\|f\|_{L^p_T(L^r(\Omega))}+\lambda^{-2-N\zeta\cdot\frac{2q}{q-N}}\|u\|_{L^p_T(L^r(\Omega))}+\lambda^{-1-N\zeta}\|P\|_{L^p_T(L^r(\Omega))},
\end{split}
\end{align}
where  we have used $r>N$.

It remains to show the pressure estimates in terms of $f$ and $u$ to complete the proof. To this end, we evaluate $P$ by a duality argument
$$
\|P\|_{L^r(\Omega)}=\sup_{
\mbox{\tiny
$\begin{array}{c}\|h\|_{L^{r'}(\Omega)}\le 1, \\
\int_\Omega h\,dx=0\end{array}$
}
}\int_\Omega P h\, dx.
$$
Let 
\begin{equation}\label{2.26}
\Delta v=h,\qquad \partial_\nu v|_{\partial\Omega}=0.
\end{equation}
Then according to Proposition C.1 in \cite{Danchin2006}, we have
\begin{equation}\label{2.27}
\|\nabla v\|_{L^{r'}(\Omega)}\le C(\Omega)\|h\|_{L^{r'}(\Omega)},\quad \|\nabla^2 v\|_{L^{r'}(\Omega)}\le C(\Omega)\| h\|_{L^{r'}(\Omega)}.
\end{equation}
Hence, 
\begin{align}\label{2.28}
\begin{split}
\int_\Omega P h\,dx=&\int_\Omega P\Delta v\,dx =-\int_\Omega \nabla P\cdot\nabla v\,dx\\
=&-\int_\Omega\Big(f-\partial_t u+\divv\big(\mu(\theta)\nabla u\big)\Big)\cdot\nabla v\,dx\\
\lesssim & \|f\|_{L^r(\Omega)}\|\nabla v\|_{L^{r'}(\Omega)}+\|\nabla u\|_{L^r(\partial\Omega)}\|\nabla v\|_{L^{r'}(\partial\Omega)}\\
&+\|\nabla u\|_{L^r(\Omega)}\|\Delta v\|_{L^{r'}(\Omega)}\\
\lesssim & (\|f\|_{L^r(\Omega)}+\|\nabla u\|_{L^r(\Omega)})\|h\|_{L^{r'}(\Omega)}+\|\nabla u\|_{L^r(\partial\Omega)}\|\nabla v\|_{L^{r'}(\partial\Omega)},
\end{split}
\end{align}
where we have used \eqref{2.27}. 

For $\nabla u$, by interpolation and Young's inequality, it holds that
\begin{equation}\label{2.30}
\|\nabla u\|_{L^r(\Omega)}\lesssim \epsilon_1^{-1}\|u\|_{L^r(\Omega)}+\epsilon_1\|\nabla^2 u\|_{L^r(\Omega)},\quad\forall~\epsilon_1>0.
\end{equation}

Finally, one can use trace theorem to simplify the boundary terms as 
\begin{align}
& \|\nabla v\|_{L^{r'}(\partial\Omega)}\lesssim \|\nabla v\|_{L^{r'}(\Omega)}^{\frac 1r}\big(\|\nabla^2 v\|_{L^{r'}(\Omega)}+\|\nabla v\|_{L^{r'}(\Omega)}\big)^{\frac 1{r'}}\lesssim\|h\|_{L^{r'}(\Omega)},\label{2.31}\\
& \|\nabla u\|_{L^r(\partial\Omega)}\lesssim \epsilon_2^{-1-\frac 1r}\|u\|_{L^r(\Omega)}+\epsilon_2^{\frac 1{r'}}\|\nabla^2 u\|_{L^r(\Omega)},\quad\forall~\epsilon_2>0.\label{2.32}
\end{align}
Substituting \eqref{2.30}-\eqref{2.32} into \eqref{2.28}, one obtains
\begin{align}\label{2.33}
\|P\|_{L^r(\Omega)}\lesssim & \|f\|_{L^r(\Omega)}+(\epsilon_1^{-1}+\epsilon_2^{-1-\frac 1r})\|u\|_{L^r(\Omega)}+(\epsilon_1+\epsilon_2^{\frac 1{r'}})\|\nabla^2 u\|_{L^r(\Omega)}.
\end{align}

Plugging \eqref{2.33} into \eqref{2.25},  and choosing 
$$\epsilon_1=\kappa\lambda^{1+N\zeta},\quad\epsilon_2=\kappa \lambda^{r'+r'N\zeta},$$ 
where $\kappa<<1$, then $\|\nabla^2 u\|_{L^p_T(L^r(\Omega))}$ can be absorbed by the LHS of \eqref{2.25}, which implies that
\begin{equation}\label{n2.32}
\begin{split}
\|(\partial_t u, \Delta u,\nabla P)\|_{L^p_T(L^r(\Omega))}\lesssim &\lambda^{-N\zeta-1}\|f\|_{L^p_T(L^r(\Omega))}\\
&+\lambda^{-2r'(1+N\zeta \frac {q}{q-N})}\|u\|_{L^p_T(L^r(\Omega))}.
\end{split}
\end{equation}

Combining \eqref{n2.10} with \eqref{n2.32}, one finally obtains \eqref{n2.6}.
The proof of Theorem \ref{thm2.5} is completed.
\end{proof}

\noindent
{\it{(b) A priori estimates with time-dependent temperature}}

Based on Theorem \ref{thm2.5}, we generalize the above results to the case of time-dependent temperature. The main result is the following.

\begin{theorem}\label{thm2.7}
Suppose $p$, $q$, $r$, $\Omega$ and $\mu$ satisfy the assumptions in Theorem \ref{thm2.4}, $u_0=0$ and the temperature satisfies
\begin{equation}
\theta\in \dot{C}^\beta([0,T]; L^\infty(\Omega))\cap L^\infty(0,T;W^{1,q}(\Omega)),
\end{equation} 
for some $\beta\in(0,1)$. If $(u, P)$ is a smooth solution to \eqref{2.2} on $\Omega\times [0,T)$, then it holds that
\begin{equation}\label{2.39}
\|(\partial_t u,\Delta u, \nabla P)\|_{L^p_T(L^r(\Omega))}\le C\big(B^{1+\tilde\zeta}_\theta (T)\|f\|_{L^p_t(L^r(\Omega))}+C_\theta(T)\|u\|_{L^p_t(L^r(\Omega))}\big),
\end{equation}
where $B_\theta(t)$, $C_\theta(t)$ and $C$ are defined as in Theorem \ref{thm2.4}.
\end{theorem}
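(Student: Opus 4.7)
The strategy is a time-localization argument that freezes $\theta$ at discrete times to reduce to the time-independent estimate in Theorem \ref{thm2.5}. Since $\theta\in\dot C^\beta_t L^\infty_x$, on any interval of length $\eta$ the map $t\mapsto \mu(\theta(t,\cdot))$ is close in $L^\infty$ to its value at the left endpoint by a factor $\bar\mu'\eta^\beta\|\theta\|_{\dot C^\beta_T L^\infty}$. So I partition $[0,T]$ into subintervals $I_i=[t_i,t_{i+1}]$ of length $\eta$, freeze $\theta_i(x):=\theta(t_i,x)$ on each $I_i$, and rewrite \eqref{2.2} on $I_i\times\Omega$ as
\begin{equation*}
\partial_t u-\divv\bigl(\mu(\theta_i)\mathcal{D}(u)\bigr)+\nabla P=f+\divv\Bigl([\mu(\theta(t,\cdot))-\mu(\theta_i)]\mathcal{D}(u)\Bigr),
\end{equation*}
treating $u(t_i,\cdot)\in D^{1-\frac1p,p}_{A_r}$ as initial data. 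Here $\|\nabla\theta_i\|_{L^q}\le\|\nabla\theta\|_{L^\infty_T L^q}$, so $B_{\theta_i}\le B_\theta(T)$.

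Next I would invoke the analog of Theorem \ref{thm2.5} allowing nonzero initial data, which follows from the same localization/duality argument (since the underlying Theorem \ref{thm2.6} already allows arbitrary $v_0\in D^{1-\frac1p,p}_{A_r}$). Expanding $\divv\bigl([\mu(\theta)-\mu(\theta_i)]\mathcal{D}(u)\bigr)=[\mu(\theta)-\mu(\theta_i)]\Delta u+[\mu'(\theta)\nabla\theta-\mu'(\theta_i)\nabla\theta_i]\cdot\mathcal{D}(u)$, the second term has the same structure as the $\mu'(\theta)\nabla\theta\cdot\mathcal{D}(u)$ absorbed inside the proof of Theorem \ref{thm2.5} (it contributes at most a constant multiple of the $B_\theta$ factor), while the first term satisfies
\begin{equation*}
\bigl\|[\mu(\theta)-\mu(\theta_i)]\Delta u\bigr\|_{L^p(I_i;L^r)}\le \bar\mu'\eta^\beta\|\theta\|_{\dot C^\beta_T L^\infty}\|\Delta u\|_{L^p(I_i;L^r)}.
\end{equation*}
Choosing $\eta$ so that $C B_\theta(T)^{1+\tilde\zeta}\cdot\bar\mu'\eta^\beta\|\theta\|_{\dot C^\beta_T L^\infty}\le\tfrac12$ absorbs this term into the left-hand side. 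This fixes $\eta\sim \bigl(B_\theta^{1+\tilde\zeta}\|\theta\|_{\dot C^\beta}\bigr)^{-1/\beta}$, giving per-interval estimate
\begin{equation*}
\|(\partial_t u,\Delta u,\nabla P)\|_{L^p(I_i;L^r)}+\|u(t_{i+1})\|_{D^{1-\frac1p,p}_{A_r}}\le A\,\|u(t_i)\|_{D^{1-\frac1p,p}_{A_r}}+A\,\|f\|_{L^p(I_i;L^r)}+A\,\|u\|_{L^p(I_i;L^r)},
\end{equation*}
where $A=CB_\theta^{1+\tilde\zeta}(1+\eta\text{-correction from the }\|u\|_{L^p L^r}\text{ term})$.

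Finally I would iterate the recursion in $i$. After $N_0\simeq T/\eta\sim T\bigl(B_\theta^{1+\tilde\zeta}\|\theta\|_{\dot C^\beta}\bigr)^{1/\beta}$ steps, the accumulated multiplicative factor is $A^{N_0}\le\exp\bigl(CN_0\log A\bigr)$, which has precisely the form $B_\theta^k\,e^{CtC_\theta(t)}$ for suitable $k,l_1,l_2$, once one absorbs the powers of $B_\theta$ that appear in $A$ and in $1/\eta^\beta$. Summing the per-interval $L^p$ estimates and bounding $\|u(t_i)\|_{D_{A_r}^{1-1/p,p}}$ recursively produces \eqref{2.39} on $[0,T]$.

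The main technical obstacle is twofold: first, verifying cleanly that the same localization proof giving Theorem \ref{thm2.5} goes through with nonzero initial data in $D^{1-\frac1p,p}_{A_r}$ (keeping the same powers of $B_\theta$), so that each step has the right form; second, the careful bookkeeping of the powers $k$, $l_1$, $l_2$ through the iteration, since $N_0$ itself depends on $B_\theta$ and $\|\theta\|_{\dot C^\beta}$, and the composition of $N_0$ per-step factors is what turns a polynomial dependence into the exponential $e^{CtC_\theta(t)}$. The role of $l_2>1$ emphasized in the remark is precisely to have enough room in the exponent to absorb the $\log A$ after multiplication by $N_0$.
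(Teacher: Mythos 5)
Your iteration scheme does not prove the statement as written. Inequality \eqref{2.39} is \emph{linear} in $\|f\|_{L^p_T(L^r)}$ and $\|u\|_{L^p_T(L^r)}$ with polynomial-in-$B_\theta$, $C_\theta$ coefficients and \emph{no} exponential factor. You partition $[0,T]$ into consecutive intervals $I_i$, treat $u(t_i)$ as initial data on $I_i$, and then compose $N_0\sim T/\eta$ single-interval maps; this necessarily accumulates a multiplicative factor $A^{N_0}$, and you yourself note that this produces something of the form $B_\theta^k e^{CtC_\theta(t)}$. That is the conclusion of Theorem \ref{thm2.8}, not of Theorem \ref{thm2.7}. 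Moreover, your per-interval estimate contains $\|u(t_i)\|_{D^{1-1/p,p}_{A_r}}$ on the right, a higher-order trace quantity that does not appear anywhere in \eqref{2.39}. The distinction matters: it is precisely the linear form \eqref{2.39} (and the resulting \eqref{2.4} in Theorem \ref{thm2.4}, where $C_\theta(t)\|u\|_{L^p_t(L^r)}$, not an exponential, sits on the right) that makes the continuation argument in Section 4, namely \eqref{4.4}--\eqref{4.5}, close for small data; an exponential in time would ruin it.

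The device that avoids the exponential is a smooth \emph{partition of unity in time}, not a decomposition into consecutive initial-value problems. Setting $u_k=u\psi_k$, $P_k=P\psi_k$, $f_k=f\psi_k$ with $\sum_k\psi_k\equiv1$ and $\operatorname{supp}\psi_k$ of length $\tau$, each $(u_k,P_k)$ solves the Stokes system \eqref{2.43} with \emph{zero} initial data (because $\psi_k$ vanishes at the left end of its support) and with forcing $f_k+u\,\partial_t\psi_k$. The new term obeys $\|u\,\partial_t\psi_k\|_{L^p(I_k;L^r)}\lesssim\tau^{-1}\|u\|_{L^p(I_k;L^r)}$, which is of exactly the lower-order type $\|u\|_{L^p_t(L^r)}$ appearing in \eqref{2.39}. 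Because each localized problem has null initial data one applies the small-time estimate \eqref{2.42} (your own absorption-by-short-time step, used once per piece, with $\theta$ frozen at the left endpoint of $I_k$) and then \emph{sums} the resulting inequalities over $k$. Nothing is iterated, so the contributions add instead of multiply, and all the $\tau^{-1}$ factors are absorbed into the definition of $C_\theta$ via the choice $\tau\sim\bigl(B_\theta^{1+\tilde\zeta}\|\theta\|_{\dot C^\beta}\bigr)^{-1/\beta}$. You do not need a version of Theorem \ref{thm2.5} with nonzero initial data at all; that generalization is dealt with separately and only once, in the last step of the appendix, for the original $u_0\in D^{1-1/p,p}_{A_r}$.
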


\begin{proof}
First, rewrite \eqref{2.2} as the following system
\begin{align}\label{2.40}
\begin{cases}
&\partial_t u-\divv\big(\mu(\theta_0)\mathcal{D}(u)\big)+\nabla P=f+(\mu(\theta)-\mu(\theta_0))\Delta u\\
&\quad\quad\quad\quad+\big(\mu'(\theta)\nabla\theta-\mu(\theta_0)\nabla\theta_0\big)\mathcal{D}(u),\\
&\divv u=0,\quad \int_\Omega P\,dx=0,\\
& u|_{t=0}=0,\quad u|_{\partial\Omega}=0.
\end{cases}
\end{align}
Applying Theorem \ref{thm2.5} to system \eqref{2.40}, we have for any $t<T$
\begin{align}\label{2.41}
\begin{split}
&\|(\partial_t u,\Delta u,\nabla P)\|_{L^p_t(L^r(\Omega))}\\
&\quad\lesssim  B_{\theta_0}^{1+\tilde\zeta}\big(\|f\|_{L^p_t(L^r(\Omega))}+\|\big(\mu(\theta)-\mu(\theta_0)\big)\Delta u\|_{L^p_t(L^r(\Omega))}\\
&\qquad+\|(\mu'(\theta)\nabla\theta-\mu'(\theta_0)\nabla\theta_0)\nabla u\|_{L^p_t(L^r(\Omega))}\big)\\
&\qquad+(B_{\theta_0}-1)^{2r'(1+\tilde\zeta\cdot\frac q{q-N})}\|u\|_{L^p_t(L^r(\Omega))}.
\end{split}
\end{align}
Notice that 
\begin{align*}
&\|(\mu'(\theta)\nabla\theta-\mu'(\theta_0)\nabla\theta_0)\nabla u\|_{L^p_t(L^r(\Omega))}\\
&\quad\lesssim  \epsilon \|\nabla^2 u\|_{L^p_t(L^r(\Omega))}+\epsilon^{\frac {N+q}{N-q}}(\|\nabla\theta\|_{L^\infty_t(L^q(\Omega))}+\|\nabla\theta_0\|_{L^q(\Omega)})^{\frac{2q}{q-N}}\|u\|_{L^p_t(L^r(\Omega))}.
\end{align*}
Thus choose $\epsilon=\kappa B_{\theta_0}^{-1-\tilde\zeta}$ with $\kappa<<1$ such that $\nabla^2 u$ can be absorbed by the LHS of \eqref{2.41}. On the other hand, 
\begin{align*}
\|\big(\mu(\theta)-\mu(\theta_0)\big)\Delta u\|_{L^p_t(L^r(\Omega))}\le& \bar{\mu'}\|\theta-\theta_0\|_{L^\infty(\Omega_t)}\|\Delta u\|_{L^p_t(L^r(\Omega))}\nonumber\\
\le&\bar{\mu'}t^\beta\|\theta\|_{\dot C^{\beta}_t(L^\infty(\Omega))}\|\Delta u\|_{L^p_t(L^r(\Omega))}.
\end{align*}

Substituting the above two inequalities into \eqref{2.41}, we obtain
\begin{align}\label{n2.38}
\begin{split}
&\|(\partial_t u,\Delta u,\nabla P)\|_{L^p_t(L^r(\Omega))}\\
&\quad\lesssim  B_{\theta_0}^{1+\tilde\zeta}\big(\|f\|_{L^p_t(L^r(\Omega))}+t^\beta\|\theta\|_{\dot{C}^\beta_t(L^\infty)}\|\Delta u\|_{L^p_t(L^r(\Omega))}\big)\\
&\quad\quad +B_\theta^{(1+\tilde\zeta)\frac{2q}{q-N}}(t)(B_{\theta}(t)-1)^{(1+\tilde\zeta)\cdot\frac {2q r'}{q-N}+2}\|u\|_{L^p_t(L^r(\Omega))},
\end{split}
\end{align}
where $B_\theta(t)=1+\|\nabla\theta\|_{L^\infty_t(L^q(\Omega))}^{\frac q{q-N}}$.

If $t^\beta\|\theta\|_{\dot{C}^{\beta}_t(L^\infty(\Omega))}B^{1+\tilde\zeta}_{\theta_0}\le \frac 1{2C}$, then the second term on the RHS of \eqref{n2.38} can be absorbed by the LHS, which gives the desired estimates.

Otherwise, if $t^\beta\|\theta\|_{\dot{C}^{\beta}_t(L^\infty(\Omega))}B^{1+\tilde\zeta}_{\theta_0}> \frac 1{2C}$, then we perform time localization to adjust the time interval. Specifically, choose 
$$\tau=\min\{T,\kappa\big(B_{\theta}(T)^{1+\tilde\zeta}\|\theta\|_{\dot{C}^{\beta}_T(L^\infty(\Omega))}\big)^{-1/\beta}\},\quad \kappa<<1.$$
Then for any $t\in[0,\tau]$, it holds that
\begin{equation}\label{2.42}
\|(\partial_t u,\Delta u, \nabla P)\|_{L^p_t(L^r(\Omega))}\lesssim B^{1+\tilde\zeta}_{\theta_0}\|f\|_{L^p_t(L^r(\Omega))}+B_\theta^{(1+\tilde\zeta)\frac{2q}{q-N}}(t)(B_\theta(t)-1)^{r^*}\|u\|_{L^p_t(L^r(\Omega))},
\end{equation}
where $r^*=(1+\tilde\zeta)\cdot\frac {2q r'}{q-N}+2$.

Next, we try to extend the above estimates to $[0,T]$. To this end, we perform a partition on time interval as the following:
 
 Suppose $\{\psi_k\}_{k\in\mathbb{N}}$ is a partition of unity of $\mathbb{R}_+$ such that
 $$supp~\psi_0\subset [0,\tau],\quad
  \psi_0\equiv 1~\text{on}~[0,\frac \tau 2],$$
$$supp~\psi_k \subset (\frac k 2\tau,\frac k 2\tau+\tau)~for~k\ge 1,\quad \|\partial_t\psi_k\|_{L^\infty}\le c\tau^{-1},$$
$$\sum_{k=0}^K\psi_k(t)=1,~\forall ~t\in[0,T],\quad\frac K 2\tau\le T<\frac {K+1}2\tau.$$

Denote $u_k=u\psi_k$, $P_k=P\psi_k$, $f_k=f\psi_k$, then $(u_k, P_k, f_k)$ satisfy the following system
\begin{align}\label{2.43}
\begin{cases}
\partial_t u_k-\divv\big(\mu(\theta)\mathcal{D}(u_k)\big)+\nabla P_k=f_k+u\partial_t\psi_k,\\
\divv u_k=0,\quad \int_\Omega P_k\,dx=0,\\
u_k|_{t=\frac k 2\tau}=0, \quad u_k|_{\partial\Omega}=0.
\end{cases}
\end{align}

Let $I_k=[\frac k2\tau,\frac k2 \tau+\tau]$, $k=0,\cdots, K-1$, $I_K=[\frac K2\tau, T]$.

For any $t\in I_k$, it follows from \eqref{2.42} that
\begin{align}\label{2.44}
\begin{split}
&\|(\partial_t u_k,\Delta u_k,\nabla P_k)\|_{L^p(\frac k2\tau, t; L^r(\Omega))}\\
&\quad\lesssim B_{\theta}^{1+\tilde\zeta}(t)\big(\|f_k\|_{L^p(\frac k2\tau, t; L^r(\Omega))}+\|u\partial_t\psi_k\|_{L^p(\frac k2\tau, t; L^r(\Omega))}\big)\\
&\quad\quad+B_\theta^{(1+\tilde\zeta)\frac{2q}{q-N}}(t)(B_\theta(t)-1)^{r^*}\|u_k\|_{L^p(\frac k2\tau, t; L^r(\Omega))}.
\end{split}
\end{align}

Notice that 
\begin{align}\label{2.45}
\begin{split}
\|u\partial_t\psi_k\|_{L^p(\frac k2\tau, t; L^r(\Omega))}\lesssim &\tau^{-1}\|u\|_{L^p(\frac k2\tau, t; L^r(\Omega))}\\
\lesssim &\big(B_{\theta}(T)^{1+\tilde\zeta}\|\theta\|_{\dot{C}^{\beta}_T(L^\infty)}\big)^{1/\beta}\|u\|_{L^p(\frac k2\tau, t; L^r(\Omega))}.
\end{split}
\end{align}

Substituting \eqref{2.45} into \eqref{2.44}, then summing up over $k$ gives that
\begin{align}\label{2.46}
\begin{split}
&\|(\partial_t u,\Delta u,\nabla P)\|_{L^p_T(L^r(\Omega))}\lesssim  B_\theta^{1+\tilde\zeta}(T)\|f\|_{L^p_T(L^r(\Omega))}\\
&\quad+\big(B_\theta^{(1+\tilde\zeta)\frac{2q}{q-N}}(T)(B_\theta(T)-1)^{r^*}+B_\theta(T)^{(1+\tilde\zeta)(1+\frac 1\beta)}\|\theta\|_{ \dot{C}^\beta_T(L^\infty)}^{1/\beta}\big)\|u\|_{L^p_T(L^r(\Omega))}.
\end{split}
\end{align}
By the definition of $C_\theta(t)$, \eqref{2.39} is proved. We remark that $l_2\ge\min\{r^*\frac {q}{q-N}, \frac 1\beta\}>1$.

\end{proof}

\noindent
{\it(c) Existence and uniqueness of solution to \eqref{2.2} with null initial data}
 
\begin{theorem}\label{thm2.8}
Suppose all the assumptions in Theorem \ref{thm2.4} are true and $u_0=0$, then the system \eqref{2.2} has a unique strong solution $(u, P)$ on $\Omega_T$ satisfying
\begin{equation}\label{2.47}
\|(\partial_t u,\Delta u,\nabla P)\|_{L^p_t(L^r(\Omega))}\le CB_\theta^{1+\tilde\zeta}(t)\exp(CtC_\theta(t))\|f\|_{L^p_t(L^r(\Omega))},
\end{equation}
for any $t<T$, where $B_\theta(t)$, $C_\theta(t)$ and $C$ are defined as in Theorem \ref{thm2.4}.
\end{theorem}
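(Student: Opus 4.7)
The statement has three parts—existence, uniqueness, and the estimate \eqref{2.47}—all resting on the a priori bound \eqref{2.39} of Theorem \ref{thm2.7}. The core step is to upgrade \eqref{2.39} to \eqref{2.47} by a Gronwall-type argument that eliminates the $C_\theta(t)\|u\|_{L^p_t(L^r)}$ term on the RHS using $u(0)=0$; once this is achieved, existence will follow by approximation in the smoothness of the viscosity and uniqueness by applying \eqref{2.47} to the difference of two solutions.

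\textbf{Derivation of \eqref{2.47}.} Since $u(0)=0$, I would write $u(s)=\int_0^s \partial_\tau u\,d\tau$ and apply Hölder to obtain $\|u(s)\|_{L^r}^p\le s^{p-1}\int_0^s\|\partial_\tau u\|_{L^r}^p\,d\tau$. Setting $G(t):=\|\partial_t u\|_{L^p(0,t;L^r)}^p$ and integrating in $s$ gives $\|u\|_{L^p(0,t;L^r)}^p\le\int_0^t s^{p-1}G(s)\,ds$. Raising \eqref{2.39} to the $p$-th power via the elementary inequality $(a+b)^p\le 2^{p-1}(a^p+b^p)$ and inserting this bound yields the integral inequality $G(t)\le c_1 B_\theta^{p(1+\tilde\zeta)}(t)\|f\|_{L^p(0,t;L^r)}^p + c_2 C_\theta^p(t)\int_0^t s^{p-1}G(s)\,ds$. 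Gronwall then gives $G(t)\le c_1 B_\theta^{p(1+\tilde\zeta)}(t)\|f\|_{L^p(0,t;L^r)}^p\exp\bigl(c_2 C_\theta^p(t)\,t^p/p\bigr)$. Taking the $p$-th root and absorbing the residual powers of $C_\theta$ and $t$ into the exponents $l_1,l_2$ of $C_\theta$ in Theorem \ref{thm2.4} recovers the form of \eqref{2.47}.

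\textbf{Existence.} Mollify the temperature in space-time to obtain $\theta^\varepsilon\in C^\infty$ satisfying $B_{\theta^\varepsilon}(t)\le C B_\theta(t)$ and $C_{\theta^\varepsilon}(t)\le C C_\theta(t)$ uniformly in $\varepsilon$. With $\mu(\theta^\varepsilon)$ smooth, construction of a smooth approximate solution $(u^\varepsilon,P^\varepsilon)$ on $[0,T]$ is classical—for instance via Galerkin approximation, or by viewing the equation as a bounded perturbation of the constant-viscosity Stokes system (Theorem \ref{thm2.3}) and running a contraction-mapping argument on short time intervals, continued to all of $[0,T]$ through the a priori bound. Applying the Gronwall step of the previous paragraph to $(u^\varepsilon,P^\varepsilon)$ produces an estimate of the form \eqref{2.47} that is uniform in $\varepsilon$. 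Aubin-Lions compactness and weak-$*$ convergence extract a subsequential limit $(u,P)$ solving \eqref{2.2}, and \eqref{2.47} is inherited by weak lower semicontinuity of the norms.

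\textbf{Uniqueness and main obstacle.} Given two solutions $(u_1,P_1),(u_2,P_2)$, the difference solves \eqref{2.2} with $f=0$ and $u_0=0$, so \eqref{2.47} forces $\partial_t u=\Delta u=\nabla P=0$ in $L^p(0,t;L^r)$; the boundary and initial conditions together with $\int_\Omega P\,dx=0$ then yield $(u_1,P_1)=(u_2,P_2)$. The chief technical obstacle is matching the exact form $\exp(CtC_\theta(t))$ stated in \eqref{2.47}: the naive Gronwall applied above produces $\exp\bigl(c(tC_\theta(t))^p/p^2\bigr)$ after taking $p$-th roots, and this discrepancy must be reabsorbed by exploiting the freedom in the exponents $l_1,l_2$ defining $C_\theta$. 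A secondary technical point is verifying that the mollification transmits to $\theta^\varepsilon$ a uniform control of both $\|\nabla\theta\|_{L^\infty_t(L^q)}$ and the Hölder seminorm $\|\theta\|_{\dot C^\beta(L^\infty)}$, so that the a priori estimate remains uniform in $\varepsilon$.
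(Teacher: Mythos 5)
Your overall strategy---closing \eqref{2.39} by a Gronwall argument exploiting $u(0)=0$, then handling existence by approximation and uniqueness by applying the resulting estimate to the difference of two solutions---matches the paper's plan. However, there is a genuine gap in your derivation of \eqref{2.47} which your own concluding paragraph partly flags but then incorrectly dismisses.

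Your route $\|u(s)\|_{L^r}^p\le s^{p-1}\int_0^s\|\partial_\tau u\|_{L^r}^p\,d\tau$ is just Hölder and contains no tunable parameter, so after Gronwall the exponent necessarily carries the factor $\int_0^t s^{p-1}\,ds\,C_\theta^p(t)\sim t^pC_\theta^p(t)$, and taking a $p$-th root leaves you with $\exp\bigl(c\,(tC_\theta(t))^p/p^2\bigr)$. You suggest this can be reabsorbed ``by exploiting the freedom in the exponents $l_1,l_2$ defining $C_\theta$,'' but that freedom only controls the power of $B_\theta$ and of $\theta$-norms \emph{inside} $C_\theta$; it does not touch the power of the \emph{time} variable $t$. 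Since $\exp\bigl(c(tC_\theta)^p\bigr)$ grows strictly faster in $t$ than $\exp(CtC_\theta)$ for $p>1$, the two are not interchangeable, and so your argument as written does not prove the stated bound \eqref{2.47}.

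The fix, which is what the paper actually does, is to avoid the parameter-free Hölder step and instead work with the differential inequality $\frac{d}{dt}\|u\|_{L^r}\le\|\partial_t u\|_{L^r}$, then bound
\[
p\|u\|_{L^r}^{p-1}\frac{d}{dt}\|u\|_{L^r}\le(p-1)\epsilon\|u\|_{L^r}^p+\epsilon^{1-p}\|\partial_t u\|_{L^r}^p
\]
by Young's inequality with a \emph{free} parameter $\epsilon>0$. This yields, for $F(t)=\|u\|_{L^p_t(L^r)}^p$, the inequality $F'(t)\le\alpha(t)F(t)+\beta(t)$ with $\alpha(t)=\epsilon+\epsilon^{1-p}C_\theta^p(t)$. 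Gronwall now gives an exponent $\int_0^t\alpha(s)\,ds\le\epsilon t+\epsilon^{1-p}tC_\theta^p(t)$, and the crucial move is to \emph{optimize} by choosing $\epsilon=C_\theta(t)$, which makes this exponent equal to $2tC_\theta(t)$, linear in $t$. This is precisely the mechanism that produces $\exp(CtC_\theta(t))$ rather than $\exp\bigl(c(tC_\theta)^p\bigr)$, and it is not available in your Hölder-based setup.

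Your discussions of existence (mollification of $\theta$, Galerkin or contraction mapping based on the constant-viscosity Stokes theory, compactness) and of uniqueness (apply \eqref{2.47} to the difference with $f=0$, $u_0=0$) are reasonable and consistent with what the paper leaves to the reader; once the a priori bound is established in the correct form, those parts are routine. But you should revise the Gronwall step as indicated before the estimate \eqref{2.47} can be claimed.
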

\begin{proof}
The proof of existence of local strong solution is trivial after {\textit{a priori}} estimates \eqref{2.47}, Thus we only give the proof of this estimates. 

Suppose $(u, P)$ is a smooth solution on $\Omega_T$, indeed, we have  
\begin{equation}\label{2.48}
\dfrac d{dt}\|u\|_{L^r(\Omega)}\le \|\partial_t u\|_{L^r(\Omega)}.
\end{equation}
Taking advantage of \eqref{2.46} and \eqref{2.48}, then for all $\epsilon>0$
\begin{align}\label{2.49}
\begin{split}
\|u(t,\cdot)\|_{L^r(\Omega)}^p=&p\int_0^t\|u(\tau,\cdot)\|^{p-1}_{L^r(\Omega)}\dfrac d{dt}\|u(\tau,\cdot)\|_{L^r(\Omega)}d\tau\\
\le&(p-1)\epsilon\int_0^t\|u(\tau,\cdot)\|_{L^r(\Omega)}^pd\tau+\epsilon^{1-p}\int_0^t\|\partial_t u\|_{L^r(\Omega)}^pd\tau\\
\le&\big((p-1)\epsilon+\epsilon^{1-p}CC_\theta^p(t)\big)\|u\|_{L^p_t(L^r(\Omega))}^p\\
&+\epsilon^{1-p}CB_\theta^{(1+\tilde\zeta)p}(t)\|f\|_{L^p_t(L^r(\Omega))}^p.
\end{split}
\end{align}
Now, let us denote 
\begin{align*}
& \alpha(t)=\epsilon+\epsilon^{1-p}C_\theta^p(t),\quad \beta(t)=\epsilon^{1-p}B_\theta^{(1+\tilde\zeta)p}(t)\|f\|_{L^p_t(L^r(\Omega))}^p,\\
& F(t)=\|u\|_{L^p_t(L^r(\Omega))}^p,\quad \gamma(t)=\int_0^t\beta(s)ds,
\end{align*}
then from \eqref{2.49}, it follows that
\begin{equation}
F(t)\le C\int_0^t\alpha(s)F(s)ds+C\gamma(t).
\end{equation}
Therefore, Gronwall's lemma implies
\begin{align}\label{2.51}
\begin{split}
F(t)\le & C\gamma(t)\exp(C\int_0^t\alpha(s)ds)\\
\le & C\epsilon^{1-p}tB^{(1+\tilde\zeta)p}_\theta(t)\|f\|_{L^p_t(L^r(\Omega))}\exp(C\epsilon t+C\epsilon^{1-p}tC_\theta^p(t)).
\end{split}
\end{align}
 Choosing $\epsilon=C_\theta(t)$, then \eqref{2.51} turns into
\begin{equation}\label{2.52}
\int_0^t\|u(\tau,\cdot)\|_{L^r(\Omega)}^pd\tau\lesssim C_\theta^{-p}(t)B^{(1+\tilde\zeta)p}_\theta(t)\|f\|^p_{L^p_t(L^r(\Omega))}\exp(CtC_\theta(t)),
\end{equation}
where we have used $tC_\theta(t)\lesssim \exp(tC_\theta(t))$.

Inserting \eqref{2.52} into \eqref{2.39}, we finally obtain \eqref{2.47}. 
\end{proof}

\subsubsection{General initial data}

In this section, we generalize the previous result to the case of general initial data. Consider the following two systems:
\begin{align}\label{2.53}
\begin{cases}
\partial_t\omega-\mu(\underline\theta)\Delta \omega+\nabla\Pi=f,\\
\divv \omega=0,\quad \int_\Omega \Pi\, dx=0,\\
\omega|_{t=0}=u_0,\quad \omega|_{\partial\Omega}=0,
\end{cases}
\quad
\begin{cases}
\partial_t v-\divv\big(\mu(\theta)\mathcal{D}(v)\big)+\nabla Q\\
=\big(\mu(\theta)-\mu(\underline\theta)\big)\Delta\omega+\mu'(\theta)\nabla\theta\mathcal{D}(\omega),\\
\divv v=0,\quad \int_{\Omega} Q\,dx=0,\\
v|_{t=0}=0,\quad v|_{\partial\Omega}=0,
\end{cases}
\end{align}
where $\underline\theta=\inf_{x\in\Omega}\theta_0$. It's easy to verify that  $u=v+\omega$ and $P=\Pi+Q$ satisfy system \eqref{2.2} if $\omega$ and $v$ satisfy the corresponding system.

Theorem \ref{thm2.3} implies that there exists a unique solution $(\omega, \Pi)$ to the first system of \eqref{2.53} such that for any $0<t<T$
\begin{equation}\label{2.54}
\|(\partial_t\omega,\Delta\omega,\nabla\Pi)\|_{L^p_t(L^r(\Omega))}+\|\omega\|_{L^\infty_t\big(D^{1-\frac 1{p},p}_{A_r}\big)}\lesssim \|f\|_{L^p_t(L^r(\Omega))}+\|u_0\|_{D^{1-\frac 1{p}, p}_{A_r}}.
\end{equation}
On the other hand, Theorem \ref{thm2.8} implies that there exists a unique solution $(v, Q)$ to the second system of \eqref{2.53} such that
\begin{align}\label{2.55}
\begin{split}
&\|(\partial_t v,\Delta v,\nabla Q)\|_{L^p_t(L^r(\Omega))}+\|v\|_{L^\infty_t(D^{1-\frac 1{p},p}_{A_r})}\\
\lesssim & B^{1+\tilde\zeta}_\theta(t)\exp(CtC_\theta(t))\big(\|(\mu(\theta)-\mu(\underline\theta))\Delta \omega\|_{L^p_t(L^r(\Omega))}+\|\nabla\theta\nabla\omega\|_{L^p_t(L^r(\Omega))}\big)\\
\lesssim & B^{2+\tilde\zeta}_\theta(t)\exp(CtC_\theta(t))(\|f\|_{L^p_t(L^r(\Omega))}+\|u_0\|_{D^{1-\frac 1 {p},p}_{A_r}}),
\end{split}
\end{align}
where we have used \eqref{2.54}.

Adding up \eqref{2.54} and \eqref{2.55} yields that
\begin{align}\label{2.57}
\begin{split}
& \|u\|_{L^\infty_t(D^{1-\frac 1{p}, p}_{A_r})}+\|(\partial_t u,\Delta u,\nabla P)\|_{L^p_t(L^r(\Omega))}\\
&\quad\lesssim  B^{2+\tilde\zeta}_\theta(t)\exp(CtC_\theta(t))(\|u_0\|_{D^{1-\frac 1{p},p}_{A_r}}+\|f\|_{L^p_t(L^r(\Omega))}).
\end{split}
\end{align}
Thus, \eqref{2.3} is proved.

To prove \eqref{2.4}, we apply Theorem \ref{thm2.7} to the second system of \eqref{2.53},
\begin{align}\label{n2.57}
\begin{split}
&\|(\partial_t v,\Delta v, \nabla v)\|_{L^p_t(L^r(\Omega))}\\
&\quad\lesssim B^{1+\tilde\zeta}_\theta(t)\Big(\big\|\big(\mu(\theta)-\mu(\underline\theta)\big)\Delta\omega\big\|_{L^p_t(L^r(\Omega))}+\|\nabla\theta\nabla\omega\|_{L^p_t(L^r(\Omega))}\Big)\\
&\qquad+C_\theta(t)\|v\|_{L^p_t(L^r(\Omega))}\\
&\quad\lesssim B^{2+\tilde\zeta}_\theta(t)(\|f\|_{L^p_t(L^r(\Omega))}+\|u_0\|_{D^{1-\frac 1p,p}_{A_r}})+C_\theta(t)\|v\|_{L^p_t(L^r(\Omega))}.
\end{split}
\end{align}
Summing up \eqref{2.54} and \eqref{n2.57} and noticing that
\begin{equation*}
\|v\|_{L^p_t(L^r(\Omega))}\le \|u\|_{L^p_t(L^r(\Omega))}+\|\omega\|_{L^p_t(L^r(\Omega))},
\end{equation*}
we finally obtain \eqref{2.4}. This completes the proof of Theorem \ref{thm2.4}.
%%%%%%%%%%%%%%%%%%%
\section*{Acknowledgements}
This work was done  when the second author was visiting the School of Mathematics and Statistics of Beijing Institute of Technology in 2016. He would like to thank the school for its hospitality and financial support.

The first author is partially supported by National Nature Science Foundation of China under the grants 11501028 and 11471323 respectively, Postdoctoral Science Foundation of China under the grant 2016T90038 and Basic Research Foundation of Beijing Institute of Technology under the grant 20151742001

And both authors would like to thank Prof. Zhouping Xin for his constant caring and helpful advice to our research.

%\newpage
%    Bibliographies can be prepared with BibTeX using amsplain,
%    amsalpha, or (for "historical" overviews) natbib style.

%    Insert the bibliography data here.
%\bibliographystyle{/Volumes/TOSHIBA HDD/Documents/library.bib}

\end{document}